\newcommand\version{May 4, 2022}
\newtheorem{theorem}{Theorem}
\newtheorem{proposition}[theorem]{Proposition}
\newtheorem{corollary}[theorem]{Corollary}
\theoremstyle{definition}
\theoremstyle{remark}
\newcommand{\1}{\mathbbm{1}}
\newcommand{\C}{\mathbb{C}}
\renewcommand{\epsilon}{\varepsilon}
\newcommand{\loc}{{\rm loc}}
\newcommand{\N}{\mathbb{N}}
\renewcommand{\phi}{\varphi}
\newcommand{\R}{\mathbb{R}}
\DeclareMathOperator{\dist}{dist}
\DeclareMathOperator{\per}{Per}
\DeclareMathOperator{\re}{Re}
\DeclareMathOperator{\supp}{supp}
\DeclareMathOperator{\Tr}{Tr}
\begin{document}

\title[Rearrangement methods in the work of Elliott Lieb --- \version]{Rearrangement methods\\ in the work of Elliott Lieb}

\author{Rupert L. Frank}
\address[Rupert L. Frank]{Mathe\-matisches Institut, Ludwig-Maximilans Universit\"at M\"unchen, The\-resienstr.~39, 80333 M\"unchen, Germany, and Munich Center for Quantum Science and Technology, Schel\-ling\-str.~4, 80799 M\"unchen, Germany, and Mathematics 253-37, Caltech, Pasa\-de\-na, CA 91125, USA}
\email{r.frank@lmu.de}

\thanks{\copyright\, 2022 by the author. This paper may be reproduced, in its entirety, for non-commercial purposes.\\
	It is a great pleasure to thank Elliott Lieb for many interesting discussions, mathematical and otherwise, throughout the years. Helpful comments on this manuscript by Eric Carlen, Mathieu Lewin, Michael Loss and Julien Sabin are gratefully acknowledged.\\
	Partial support through US National Science Foundation grants DMS-1954995, as well as through the Deutsche Forschungsgemeinschaft (DFG, German Research Foundation) through Germany’s Excellence Strategy EXC-2111-390814868 is acknowledged.}

\dedicatory{Dedicated, in great admiration, to Elliott Lieb on the occasion of his 90th birthday}

\begin{abstract}
	We review some topics in the theory of symmetric decreasing rearrangements with a particular focus on Lieb's fundamental contributions. Topics covered include the Brascamp--Lieb--Luttinger theorem, the sharp Young and Hardy--Littlewood--Sobolev inequalities, as well as the continuity of the rearrangement map on Sobolev spaces.
\end{abstract}

\maketitle

Symmetrization of sets goes back to the nineteenth century works of Steiner, and later Schwarz, on the isoperimetric problem. In the 1920s Faber and Krahn introduced the symmetric decreasing rearrangement of functions in their proofs of Rayleigh's conjecture on the fundamental frequency of a membrane. This technique is discussed in the influential books by Hardy, Littlewood and P\'olya \cite{HLP} and by P\'olya and Szeg\H{o} \cite{PoSz}, first published, respectively, in 1934 and 1951.

Starting from the early 1970s, Lieb has made several fundamental contributions to this theory and our goal here is to review some of them, as well as some ramifications. 

The textbook \cite{LiLo} by Lieb and Loss, which has become a standard reference for symmetric decreasing rearrangement, is an excellent starting point for getting acquainted with these methods. Another useful reference, besides the three mentioned already, is \cite{Ba}.

This paper is organized as follows. In Section \ref{sec:basic}, we quickly review the basic properties of rearrangement. This is followed in Section \ref{sec:bll} with a rather detailed discussion of the Riesz rearrangement inequality and its far-reaching generalization, the Brascamp--Lieb--Luttinger theorem. Items that we discuss here include the Brascamp--Lieb proof of the sharp Young inequality and Lieb's proof of the sharp Hardy--Littlewood--Sobolev inequality. The following two sections, Sections \ref{sec:fracsob} and \ref{sec:sob}, concern symmetrization in fractional and first-order Sobolev spaces, respectively. We present both Lieb's simple proofs of the corresponding rearrangement inequalities, as well as his deep results, jointly with Almgren, on continuity of the rearrangement map. The final section, Section \ref{sec:stab}, concerns stability questions and contains, among other things, a recent quantitative version of the Riesz rearrangement inequality.


\section{Basic properties of rearrangement}\label{sec:basic}

We briefly recall the definition and review basic properties of symmetric decreasing rearrangement. For proofs, we refer to the textbook \cite{LiLo}.

Let $A\subset\R^d$ be a set of finite measure. (Here, and in what follows, when we assume that a set has finite measure, we assume implicitly that the set is (Borel) measurable.) We define its symmetrization $A^*$ to be the open ball in $\R^d$ centered at the origin with the same measure as $A$, that is,
$$
A^* := \left\{ x\in\R^d :\ |x| < \omega_d^{-1/d} |A|^{1/d} \right\},
$$
where $\omega_d:=|\{x\in\R^d:\ |x|<1\}|$ is the measure of the unit ball. (The choice of an \emph{open} ball is for definiteness only. The theory can be developed similarly with a closed ball.) 

We say that a function $f$ on $\R^d$ is \emph{vanishing at infinity} if it is measurable and
\begin{equation}
	\label{eq:vanish}
	|\{ |f|>\tau \}|<\infty
	\qquad\text{for all}\ \tau>0 \,.
\end{equation}
The \emph{symmetric decreasing rearrangement} of a function $f$ on $\R^d$, vanishing at infinity, is defined by
\begin{equation}
	\label{eq:schwarz}
	f^*(x) := \int_0^\infty \1_{\{|f|>\tau\}^*}(x) \,d\tau
	\qquad\text{for all}\ x\in\R^d \,.
\end{equation}
This formula should be compared with the `layer cake representation' of $|f|$,
\begin{equation}
	\label{eq:layercake}
	|f(x)| = \int_0^{|f(x)|} \,d\tau = \int_0^\infty \1_{\{|f|>\tau\}}(x) \,d\tau
	\qquad\text{for all}\ x\in\R^d \,.
\end{equation}
By definition, $f^*$ is a nonnegative, radially symmetric and nonincreasing function. The radial symmetry and monotonicity imply that $f$ is measurable. It is an easy exercise to show that
\begin{equation}
	\label{eq:schwarzsuperlevel}
	\{ f^*>\tau\} = \{|f|>\tau\}^*
	\qquad\text{for all}\ \tau\geq 0 \,.
\end{equation}
This has several useful consequences. First, $f^*$ is lower semi-continuous, i.e., $\{f^*>\tau\}$ is open for any $\tau>0$. (If we had defined $A^*$ to be a \emph{closed} ball, $f^*$ would be upper semi-continuous.) Second, $f^*$ is a rearrangement of $f$, in the sense that $|\{|f^*|>\tau\}| = |\{|f|>\tau\}|$ for all $\tau\geq 0$. This implies, in particular, that
\begin{equation}
	\label{eq:symmp}
	\int_{\R^d} |f^*|^p \,dx = \int_{\R^d} |f|^p \,dx
	\qquad\text{for all}\ p>0 \,.
\end{equation}
A simple, but useful property of rearrangement is that, for any nonnegative functions $f$ and $g$ on $\R^d$, vanishing at infinity, one has
\begin{equation}
	\label{eq:simplest}
	\int_{\R^d} fg \,dx \leq \int_{\R^d} f^*g^* \,dx \,.
\end{equation}
The following theorem is a vast generalization of this. It appears in \cite{CZR}, but at least special cases seem to have been known before, as discussed in the introduction of that paper; see also \cite{Lo}.

\begin{theorem}\label{bl0}
	Let $F:[0,\infty)\times[0,\infty)\to [0,\infty)$ be continuous with $F(0,0)=0$ and
	$$
	F(u_2,v_2) + F(u_1,v_1) \geq F(u_2,v_1) + F(u_1,v_2)
	\qquad\text{for all}\ u_2\geq u_1\geq 0 \,,\ v_2\geq v_1\geq 0 \,.
	$$
	Then, for all nonnegative functions $f,g$ on $\R^d$, vanishing at infinity,
	\begin{equation}
		\label{eq:blsimple}
		\int_{\R^d} F(f(x),g(x))\,dx \leq \int_{\R^d} F(f^*(x),g^*(x))\,dx \,.
	\end{equation}
\end{theorem}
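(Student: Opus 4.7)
The plan is to reduce the inequality \eqref{eq:blsimple} to the special case \eqref{eq:simplest} applied to indicator functions of super-level sets, by representing $F$ as a nonnegative superposition of the elementary supermodular building blocks $(u,v)\mapsto \1_{u>s}\1_{v>t}$.

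First, I would peel off the boundary values on the axes. Setting $u_1=v_1=0$ in the supermodularity hypothesis and using $F(0,0)=0$ gives $F(u,v)\ge F(u,0)+F(0,v)$, so
$$
G(u,v) := F(u,v) - F(u,0) - F(0,v)
$$
is continuous, nonnegative, still supermodular (supermodularity is unaffected by subtracting functions of one variable), and vanishes on both axes. Since $f$ and $f^*$ are equimeasurable (and likewise $g,g^*$), one has $\int F(f,0)\,dx = \int F(f^*,0)\,dx$ and $\int F(0,g)\,dx = \int F(0,g^*)\,dx$, so it suffices to prove \eqref{eq:blsimple} with $G$ in place of $F$.

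Second, I would encode the supermodularity of $G$ as a positive Borel measure $\mu_G$ on $[0,\infty)^2$ defined on rectangles by
$$
\mu_G\bigl([s_1,s_2)\times[t_1,t_2)\bigr) := G(s_2,t_2) - G(s_1,t_2) - G(s_2,t_1) + G(s_1,t_1),
$$
which is nonnegative by supermodularity and extends to a Borel measure by Carath\'eodory (using continuity of $G$ for $\sigma$-additivity on the semi-ring of rectangles). For $G$ of class $C^2$ one has explicitly $d\mu_G = \partial_s\partial_t G\,ds\,dt\ge 0$; the general case can alternatively be treated by mollification, which preserves supermodularity. Because $G$ vanishes on the axes,
$$
G(u,v) = \mu_G\bigl([0,u)\times[0,v)\bigr) = \int_0^\infty\!\int_0^\infty \1_{s<u}\,\1_{t<v}\,d\mu_G(s,t).
$$

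Finally, substituting this representation into $\int G(f,g)\,dx$ and applying Fubini--Tonelli (all integrands nonnegative and measurable),
$$
\int_{\R^d} G(f(x),g(x))\,dx = \int_0^\infty\!\int_0^\infty \bigl|\{f>s\}\cap\{g>t\}\bigr|\,d\mu_G(s,t).
$$
For each $s,t>0$ the sets $\{f>s\}$ and $\{g>t\}$ have finite measure (as $f,g$ vanish at infinity), so applying \eqref{eq:simplest} to their indicator functions and invoking \eqref{eq:schwarzsuperlevel} together with the identity $\1_A^* = \1_{A^*}$ yields the pointwise bound $|\{f>s\}\cap\{g>t\}| \le |\{f^*>s\}\cap\{g^*>t\}|$. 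Integrating against $d\mu_G$ gives the desired inequality for $G$, hence for $F$. The principal obstacle is the second step: the supermodularity condition is the correct analogue of a nonnegative mixed second derivative of $F$, but from mere continuity one must either work with the Lebesgue--Stieltjes measure on rectangles and carefully verify $\sigma$-additivity, or regularize and pass to the limit.
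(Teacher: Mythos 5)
Your proof is correct and takes essentially the same route as the paper: peel off the axis terms $F(u,0)+F(0,v)$ (invariant under rearrangement by equimeasurability) and represent the remaining supermodular part as a nonnegative superposition of the blocks $\1_{\{u>s\}}\1_{\{v>t\}}$, then apply \eqref{eq:simplest} to indicators of super-level sets via \eqref{eq:schwarzsuperlevel} and integrate. The only difference is technical: where the paper reduces to twice continuously differentiable $F$ ``via an approximation argument'' and uses the density $\frac{\partial^2 F}{\partial u\partial v}\,ds\,dt$, you build the Lebesgue--Stieltjes measure of the supermodular part directly from its rectangle increments, which treats merely continuous $F$ without mollification.
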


For twice continuously differentiable $F$, the assumed inequality is equivalent to $\frac{\partial^2}{\partial u\partial v}F\geq 0$.

For $F(u,v)=uv$, inequality \eqref{eq:blsimple} reduces to \eqref{eq:simplest}. Conversely, the proof of Theorem \ref{bl0} proceeds by reducing it to \eqref{eq:simplest}. Indeed, assuming, via an approximation argument, that $F$ is twice continuously differentiable, we can write
$$
F(u,v) = F(u,0) + F(0,v) + \iint_{(0,\infty)\times (0,\infty)} \frac{\partial^2 F}{\partial u\partial v}(s,t) \1_{(0,\infty)}(u-s) \1_{(0,\infty)}(v-t)\,ds\,dt \,.
$$
The contributions of the first two terms on the right side to $\int_{\R^d} F(f(x),g(x))\,dx$ are invariant under rearrangement, while, for fixed $s,t\in(0,\infty)$, we have, by \eqref{eq:simplest},
$$
\int_{\R^d} \1_{\{f>s\}}(x) \1_{\{g>t\}}(x)\,dx \leq \int_{\R^d} \1_{\{f^*>s\}}(x) \1_{\{g^*>t\}}(x)\,dx \,.
$$
This proves \eqref{eq:blsimple}.

Let us draw some conclusions from Theorem \ref{bl0}. Taking $F(u,v) = j(u)+j(v)-j(|u-v|)$, which satisfies the assumption provided $j$ is nonnegative and convex on $[0,\infty)$ with $j(0)=0$, we obtain the following rearrangement inequalities.

\begin{corollary}\label{expand}
	Let $j$ be a nonnegative, convex function on $[0,\infty)$ with $j(0)=0$. Then, for any nonnegative functions in $f$ and $g$ on $\R^d$, vanishing at infinity,
	$$
	\int_{\R^d} j(|f^*(x)-g^*(x)|)\,dx \leq \int_{\R^d} j(|f(x)-g(x)|)\,dx
	$$
	and
	$$
	\int_{\R^d} j(f^*(x)+g^*(x))\,dx \geq \int_{\R^d} j(f(x)+g(x))\,dx \,.
	$$
\end{corollary}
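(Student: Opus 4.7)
The plan is to deduce both inequalities from Theorem \ref{bl0} applied to two different choices of $F$, combined with the fact that $f$ and $f^*$ are equimeasurable, so that $\int_{\R^d} j(h)\,dx = \int_{\R^d} j(h^*)\,dx$ for any nonnegative $h$ vanishing at infinity (this extends \eqref{eq:symmp} from $j(t)=t^p$ to any continuous nondecreasing $j$ with $j(0)=0$, via a layer-cake argument).

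For the first inequality, I would take $F(u,v) := j(u) + j(v) - j(|u-v|)$. Continuity of $F$ is clear once one observes that a nonnegative convex $j$ on $[0,\infty)$ with $j(0)=0$ is automatically continuous at $0$ (and hence everywhere), and $F(0,0)=0$ is immediate. Nonnegativity of $F$ follows from the superadditivity $j(a+b)\geq j(a)+j(b)$ for $a,b\geq 0$, itself a standard consequence of convexity together with $j(0)=0$: for $u\geq v$ this gives $j(u)-j(u-v)\geq j(v)$, whence $F(u,v)\geq 2j(v)\geq 0$. The supermodularity hypothesis of Theorem \ref{bl0}, after the $j(u_i),j(v_j)$ terms cancel, reduces to
$$
j(|u_2-v_1|)+j(|u_1-v_2|)\;\geq\;j(|u_2-v_2|)+j(|u_1-v_1|).
$$
To prove this I set $x := u_1-v_1$, $\alpha:=u_2-u_1\geq 0$, $\beta:=v_2-v_1\geq 0$ and note that $\tilde j(t):=j(|t|)$ is convex on $\R$ (since $j$ is convex and nondecreasing on $[0,\infty)$). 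The inequality becomes
$$
\tilde j(x+\alpha)+\tilde j(x-\beta)\;\geq\;\tilde j(x)+\tilde j(x+\alpha-\beta).
$$
Both sides have sum $2x+\alpha-\beta$, and $\max\{x+\alpha,x-\beta\}=x+\alpha\geq\max\{x,x+\alpha-\beta\}$, so the left pair majorizes the right pair and the inequality follows from convexity of $\tilde j$. Theorem \ref{bl0} then gives $\int_{\R^d} F(f,g)\,dx\leq \int_{\R^d} F(f^*,g^*)\,dx$, and equimeasurability cancels the $j(f),j(g)$ contributions to produce the first inequality.

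For the second inequality, I would instead choose $F(u,v):=j(u+v)$, for which nonnegativity, continuity and $F(0,0)=0$ are immediate. The supermodularity condition becomes $j(u_2+v_2)+j(u_1+v_1)\geq j(u_2+v_1)+j(u_1+v_2)$; the two sides share the sum $u_1+u_2+v_1+v_2$, while $u_2+v_2\geq\max\{u_2+v_1,u_1+v_2\}$, so the same two-point majorization combined with convexity of $j$ delivers the bound. Theorem \ref{bl0} then immediately yields $\int_{\R^d} j(f+g)\,dx\leq \int_{\R^d} j(f^*+g^*)\,dx$.

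The only genuinely nontrivial step is the supermodularity check for $F(u,v)=j(u)+j(v)-j(|u-v|)$ in the first case; recognizing that it reduces to a standard two-point majorization inequality for the even convex extension $j(|\cdot|)$ is the key observation, and everything else is routine bookkeeping.
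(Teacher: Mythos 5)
Your proposal is correct and takes essentially the same route as the paper, which obtains the corollary from Theorem \ref{bl0} with $F(u,v)=j(u)+j(v)-j(|u-v|)$; your verification of the supermodularity via the even convex extension $j(|\cdot|)$ and two-point majorization, and the cancellation of the $\int j(f)$, $\int j(g)$ terms by equimeasurability, simply make explicit what the paper leaves implicit (including the same tacit finiteness/approximation caveat in the cancellation step). Your choice $F(u,v)=j(u+v)$ for the second inequality is the natural direct variant of the paper's hint and is in fact slightly cleaner, since it yields the inequality without any cancellation.
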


This corollary appears in \cite{Chi}; see also \cite{CrTa} for the first inequality.

Further specializing to $j(t)=t^p$ with $p\geq 1$ and using $|f(x)-g(x)|\geq ||f(x)|-|g(x)||$ and $|f(x)+g(x)|\leq |f(x)|+|g(x)|$, we find that for any complex-valued functions $f,g$ on $\R^d$, vanishing at infinity,
\begin{equation}
	\label{eq:expand}
	\| f - g \|_{L^p(\R^d)} \geq \|f^*-g^*\|_{L^p(\R^d)} \,,
	\qquad
	\| f + g \|_{L^p(\R^d)} \leq \|f^* + g^*\|_{L^p(\R^d)} \,.
\end{equation}
The first inequality here means that rearrangement is nonexpansive on $L^p(\R^d)$.

For the sum of the two terms in the inequalities \eqref{eq:expand} and for any complex-valued functions $f,g$ on $\R^d$, vanishing at infinity, Carlen and Lieb \cite{CaLi} observed that
\begin{equation}
	\label{eq:hanner}
	\| f - g\|_{L^p(\R^d)}^p + \|f+g\|_{L^p(\R^d)}^p \geq \| f^* - g^*\|_{L^p(\R^d)}^p + \|f^*+g^*\|_{L^p(\R^d)}^p
\end{equation}
provided $1\leq p\leq 2$. For $2\leq p<\infty$ the inequality reverses. For nonnegative $f,g$ this is a consequence of Theorem \ref{bl0}, and the general case can be reduced to this. Inequality \eqref{eq:hanner} is the commutative analogue (and generalization) of a matrix re\-arrangement inequality in \cite{CaLi}, which is, unfortunately, outside of the scope of this review.

\medskip

For a generalization of Theorem \ref{bl0} to more than two functions we refer to \cite{Lo,Br,BuHa}. The latter paper also discusses cases of equality.

\medskip

We emphasize that in this section the fact that the underlying space is $\R^d$ played a relatively minor role. For a function on a general measure space, one can define a rearrangement to be an nonincreasing, equimeasurable function on a subinterval of $[0,\infty)$ of length equal to the total measure of the given space; see \cite[Chapter 1]{Ba}. The structure of $\R^d$ will play an important role in the following sections.


\section{The Brascamp--Lieb--Luttinger theorem}\label{sec:bll}

A fundamental theorem about rearrangements asserts that, for nonnegative functions $f,g,h$ on $\R^d$, vanishing at infinity, we have
\begin{equation}
	\label{eq:riesz}
	\iint_{\R^d\times\R^d} f(x)g(x-y)h(y)\,dx\,dy \leq \iint_{\R^d\times\R^d} f^*(x)g^*(x-y)h^*(y)\,dx\,dy \,.
\end{equation}
This is due to Riesz \cite{Ri} in the one-dimensional case and was formulated by Sobolev \cite{So} in the multidimensional case.\footnote{It is pointed out in \cite{Bu} that Sobolev's proof is incomplete.}

The following theorem, due to Brascamp, Lieb and Luttinger \cite{BLL}\footnote{Essentially the same theorem appears in the paper \cite{Ro2} by Rogers; see also his earlier work \cite{Ro1}. It is pointed out in \cite[p.\ 2]{ChON} that the proof in \cite{Ro2} may be incomplete. Rogers' work has been rediscovered only recently. It has been suggested to call Theorem \ref{bll} the Rogers--Brascamp--Lieb--Luttinger theorem, but here we stick to its traditional appellation.}, is a vast generalization of the Riesz rearrangement theorem. To state it, let $M\leq N$ and consider $(b_{n,m})\in \R^{N\times M}$. For nonnegative functions $f_1,\ldots,f_N$ on $\R^d$, we set
$$
I[f_1,\ldots,f_N]
:= \int_{\R^d}\cdots \int_{\R^d} \prod_{n=1}^N f_n\Big(\sum_{m=1}^M b_{n,m} x_m\Big)\,dx_1\cdots\,dx_M \,. 
$$

\begin{theorem}[Brascamp--Lieb--Luttinger theorem]\label{bll}
	Let $f_1,\ldots,f_N$ be nonnegative functions on $\R^d$, vanishing at infinity. Then
	$$
	I[f_1,\ldots,f_N] \leq I[f_1^*,\ldots,f_N^*] \,.
	$$
\end{theorem}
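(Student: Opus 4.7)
The plan is a three-step reduction, ending at a one-dimensional statement about characteristic functions of finite unions of intervals. First, I would invoke the layer-cake formula \eqref{eq:layercake} on each factor to obtain, by Fubini,
\[
I[f_1,\ldots,f_N] = \int_0^\infty\!\!\!\cdots\int_0^\infty I\bigl[\1_{\{f_1>\tau_1\}},\ldots,\1_{\{f_N>\tau_N\}}\bigr]\,d\tau_1\cdots d\tau_N,
\]
with the analogous identity for the rearranged functions. Since $\{f_n^*>\tau\}=\{f_n>\tau\}^*$ by \eqref{eq:schwarzsuperlevel}, it suffices to prove the theorem with each $f_n$ replaced by $\1_{E_n}$ for a set $E_n\subset\R^d$ of finite measure.

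Second, I would reduce from $\R^d$ to $\R$ via Steiner symmetrization. Fix a unit direction $e$, and write $x_m=(x_m',s_m)$, where $s_m$ is the component of $x_m$ along $e$. Because the map $(x_m)\mapsto\sum_m b_{n,m}x_m$ acts coordinatewise, Fubini turns $I[\1_{E_1},\ldots,\1_{E_N}]$ into an outer integral over $(x_1',\ldots,x_M')$ of a one-dimensional integral of the same affine form in $(s_1,\ldots,s_M)$. Applying the one-dimensional version of the theorem slice by slice and integrating back yields
\[
I[\1_{E_1},\ldots,\1_{E_N}] \leq I[\1_{E_1^{*e}},\ldots,\1_{E_N^{*e}}],
\]
where $E_n^{*e}$ denotes the Steiner symmetrization of $E_n$ in direction $e$. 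Iterating in a suitable sequence of directions produces sets converging in measure to $E_n^*$, and dominated convergence then closes the reduction to the one-dimensional case.

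The core of the proof is therefore the case $d=1$ with each $f_n=\1_{E_n}$; by inner regularity one may further assume that each $E_n$ is a finite union of disjoint bounded open intervals. Here I would use a continuous rearrangement flow: for each $n$, simultaneously translate the component intervals of $E_n$ toward the origin at unit speed, merging two intervals whenever their endpoints collide. This yields families $\lambda\mapsto E_n(\lambda)$ with $E_n(0)=E_n$, $E_n(\lambda_\infty)=E_n^*$, and $|E_n(\lambda)|$ constant. The plan is to show that $\lambda\mapsto I[\1_{E_1(\lambda)},\ldots,\1_{E_N(\lambda)}]$ is nondecreasing. Differentiating at a regular $\lambda$ produces a sum, over all moving endpoints $a$ of all $E_n(\lambda)$, of surface integrals along affine hyperplanes $\{\sum_m b_{n,m}x_m=a\}$ weighted by the sign of motion of $a$; the combinatorial task is to pair up outer and inner endpoints and check that the resulting net total is nonnegative.

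\textbf{Main obstacle.} Steps one and two are essentially bookkeeping. The genuine difficulty, and the reason Theorem \ref{bll} is substantially deeper than Riesz's inequality \eqref{eq:riesz}, is step three: verifying the monotonicity of $I$ along the sliding deformation. Merging times require separate handling (the flow is not smooth there, though $I$ remains continuous), and it is precisely here that the hypothesis $M\leq N$ together with the affine structure of the integrand enters essentially, forcing the hyperplane-integral derivative computation to come out with the right sign.
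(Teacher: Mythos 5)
Your steps one and two follow the same outline that the paper itself sketches (the paper does not reproduce the full argument but refers to Brascamp--Lieb--Luttinger \cite{BLL}): layer-cake reduction to characteristic functions, reduction to $d=1$ by slicewise application along a direction $e$, i.e.\ Steiner symmetrization, and iteration of Steiner symmetrizations converging in measure to the ball (the convergence statement is itself nontrivial, but it is proved rigorously in \cite{BLL}, so citing it is legitimate). The problem is step three. You have correctly isolated the heart of the theorem --- monotonicity of $\lambda\mapsto I[\1_{E_1(\lambda)},\ldots,\1_{E_N(\lambda)}]$ along a sliding deformation of finitely many intervals --- but you do not prove it; you only describe a derivative computation (signed surface integrals over the hyperplanes $\{\sum_m b_{n,m}x_m=a\}$ attached to moving endpoints, to be paired combinatorially) and assert that it ``should'' come out nonnegative. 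That sign analysis is precisely the content of the theorem, and it is not at all clear it can be closed directly: for general $N$, $M$ and coefficients $(b_{n,m})$ the boundary terms do not admit an obvious pairing with a definite sign, which is why this naive route is not the one used in \cite{BLL}. Your appeal to the hypothesis $M\leq N$ as the mechanism forcing the sign is speculation, not an argument.

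The mechanism that actually works, and which your proposal never invokes, is the Brunn--Minkowski (equivalently, a log-concavity) inequality. In the one-dimensional interval case one studies, for symmetric intervals $J_n$ of fixed lengths, the function of the translation vector
\begin{equation*}
h(c):=\Big|\Big\{x\in\R^M:\ \sum_{m=1}^M b_{n,m}x_m\in J_n+c_n\ \text{for all}\ n\Big\}\Big| ,
\end{equation*}
which is the volume of an intersection of slabs. By Brunn--Minkowski this $h$ is log-concave, and it is even in $c$; hence $t\mapsto h\big((1-t)c\big)$ is nondecreasing on $[0,1]$. Sliding all interval centers linearly toward the origin between merging events therefore cannot decrease $I$, and merging times are handled by continuity and restarting the flow with the merged configuration. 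Without this lemma (or an equivalent substitute) your step three is an acknowledged gap, so the proposal as it stands is a plan rather than a proof.
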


For $M=1$, $N=2$ and $(b_{n,m})=\begin{pmatrix} 1 \\ 1 \end{pmatrix}$ the theorem reduces to the simple rearrangement inequality \eqref{eq:simplest}. More interestingly, for $M=2$, $N=3$ and 
$$
(b_{n,m}) = \begin{pmatrix} 1 & 0 \\ 1 & - 1 \\ 0 & 1 \end{pmatrix}
$$
it becomes the Riesz rearrangement inequality \eqref{eq:riesz}.

The heart of the proof of Theorem \ref{bll} is the case $d=1$ and each function $f_n$ equal to the characteristic function of a finite number of non-overlapping intervals. Brascamp, Lieb and Luttinger deal with this case by a beautiful `flow' argument, using the Brunn--Minkowski inequality. The full result for $d=1$ follows then by relatively soft arguments. The higher-dimensional case is deduced from the one-dimensional case by repeated Steiner symmetrization. At this point Brascamp, Lieb and Luttinger provide a rigorous proof of the fact that Steiner-symmetrizing a set repeatedly in well-chosen directions leads to a sequence converging (in measure) to a ball of the same measure. For the details of the proof of Theorem \ref{bll} we refer to the original paper \cite{BLL}.


\subsection{Applications of the BLL theorem}

We now present six applications of Theorem \ref{bll}. The first four use only the simpler version of the Riesz rearrangement inequality \eqref{eq:riesz} (in fact, only its version where one of the functions is already symmetric decreasing), while the last two use the full power of the BLL theorem.

\subsubsection*{Application 1: The isoperimetric inequality}

It is well known that one can deduce the isoperimetric inequality from the Riesz rearrangement inequality. The author learned this argument from Lieb. Our presentation here uses some ideas from \cite{Si}. For a measurable set $A\subset\R^d$, we put
$$
s(A) := \liminf_{\epsilon\to 0^+} \epsilon^{-1} \left|\{ x\in A:\ \dist(x,\R^d\setminus A)<\epsilon\}\right|.
$$
This is one of several notions of Minkowski content. Clearly, for $A$ with sufficiently regular boundary (Lipschitz, say), $s(A)$ coincides with the surface area of $\partial A$.

\begin{corollary}\label{isoper}
	If $A\subset\R^d$ has finite measure, then $s(A)\geq s(A^*)$.
\end{corollary}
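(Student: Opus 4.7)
The plan is to prove the stronger pointwise-in-$\epsilon$ inequality $V(\epsilon) \geq V_{A^*}(\epsilon)$ for every $\epsilon>0$, where $V(\epsilon) := |\{x \in A : \dist(x, \R^d \setminus A) < \epsilon\}|$; the desired statement $s(A) \geq s(A^*)$ then follows by dividing by $\epsilon$ and taking $\liminf$. Introducing the \emph{inner parallel set} $A_{-\epsilon} := \{x\in \R^d : B_\epsilon(x) \subset A\}$, one has $V(\epsilon) = |A| - |A_{-\epsilon}|$ and similarly for $A^*$. Since $|A|=|A^*|$, the task reduces to showing $|A_{-\epsilon}| \le |(A^*)_{-\epsilon}|$ for every $\epsilon>0$.

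The key tool is the Riesz rearrangement inequality \eqref{eq:riesz}, applied to the triple $(\1_{A_{-\epsilon}}, \1_{B_\epsilon}, \1_A)$. Because $\1_{B_\epsilon}$ is already symmetric decreasing, rearrangement only affects the outer two functions, yielding
$$\iint \1_{A_{-\epsilon}}(x)\, \1_{B_\epsilon}(x-y)\, \1_A(y)\,dx\,dy \ \leq\ \iint \1_{(A_{-\epsilon})^*}(x)\, \1_{B_\epsilon}(x-y)\, \1_{A^*}(y)\,dx\,dy.$$

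The heart of the argument is a squeeze. By the very definition of $A_{-\epsilon}$, whenever $x\in A_{-\epsilon}$ one has $B_\epsilon(x)\subset A$, so the inner $y$-integration on the left returns $|A\cap B_\epsilon(x)|=|B_\epsilon|$, and the LHS equals exactly $|A_{-\epsilon}|\,|B_\epsilon|$. On the other hand, the trivial pointwise bound $|A^*\cap B_\epsilon(x)|\le |B_\epsilon|$ shows that the RHS is at most $|(A_{-\epsilon})^*|\,|B_\epsilon|=|A_{-\epsilon}|\,|B_\epsilon|$. Riesz squeezes these to equality throughout, and in particular forces the almost-everywhere saturation $|A^*\cap B_\epsilon(x)|=|B_\epsilon|$ on $(A_{-\epsilon})^*$, i.e., $B_\epsilon(x)\subset A^*$ for a.e.\ $x\in (A_{-\epsilon})^*$. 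Since $(A_{-\epsilon})^*$ and $A^*$ are concentric open balls (of radii $r_1$ and $R=\omega_d^{-1/d}|A|^{1/d}$), this inclusion is equivalent to the radius comparison $r_1\le R-\epsilon$, i.e., $(A_{-\epsilon})^*\subset (A^*)_{-\epsilon}$. Taking measures gives $|A_{-\epsilon}|=|(A_{-\epsilon})^*|\le|(A^*)_{-\epsilon}|$.

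The main obstacle is recognizing the squeeze. On its own, the Riesz inequality is one-sided and seems weaker than what is needed; but combined with the trivial bound $|A^*\cap B_\epsilon(x)|\le |B_\epsilon|$ it collapses, precisely because the indicator $\1_{A_{-\epsilon}}$ in the first slot pushes the LHS to the maximum value it can take. Once equality is in hand, the geometric inclusion $(A_{-\epsilon})^* \subset (A^*)_{-\epsilon}$ is automatic, and passage to the Minkowski content via $\epsilon\to 0^+$ is routine. The degenerate case $A_{-\epsilon}=\emptyset$ needs no special treatment, since then $|A_{-\epsilon}|=0\le|(A^*)_{-\epsilon}|$ trivially.
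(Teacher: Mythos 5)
Your proof is correct, and while it shares the paper's skeleton---both arguments reduce the corollary to the comparison of inner parallel sets \eqref{eq:concentration}, i.e.\ $|A_{-\epsilon}|\le|(A^*)_{-\epsilon}|$, and both feed $\1_{B_\epsilon}$ into the middle slot of the Riesz inequality \eqref{eq:riesz}---the mechanism you use to extract \eqref{eq:concentration} is genuinely different. The paper first derives from \eqref{eq:riesz}, via H\"older duality, the norm inequality $\|g*h\|_{L^p(A)}\le\|g^**h^*\|_{L^p(A^*)}$, applies it with $g=|B_\epsilon|^{-1}\1_{B_\epsilon}$, $h=\1_A$, and then sends $p\to\infty$, using dominated convergence and the explicit form of $\1_{B_\epsilon}*\1_{A^*}$ to identify the limit as $|(A^*)_{-\epsilon}|$. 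You instead place $\1_{A_{-\epsilon}}$ directly in the first slot, note that the left side equals exactly $|A_{-\epsilon}|\,|B_\epsilon|$ while the right side is trivially at most $|(A_{-\epsilon})^*|\,|B_\epsilon|$, and use the forced equality to get $|B_\epsilon(x)\cap A^*|=|B_\epsilon|$ for a.e.\ $x\in(A_{-\epsilon})^*$, hence the radius comparison $(A_{-\epsilon})^*\subset(A^*)_{-\epsilon}$. This buys a one-shot argument with no $L^p$ norms, no H\"older, and no $p\to\infty$ limit; the small price is the bookkeeping from ``a.e.\ saturation'' to the inclusion (immediate here, since $(A_{-\epsilon})^*$ and $A^*$ are concentric balls, so the a.e.\ bound $|x|\le R-\epsilon$ already forces $r_1\le R-\epsilon$), plus the convention that $B_\epsilon(x)$ is the open ball, so that $A_{-\epsilon}=\{\dist(\cdot\,,\R^d\setminus A)\ge\epsilon\}$ matches the strict inequality in the definition of $s(A)$ and $V(\epsilon)=|A|-|A_{-\epsilon}|$ holds exactly. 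Note also that, like the paper's proof, your argument only uses the version of Riesz where the middle function is already symmetric decreasing, and the ``squeeze'' is mere saturation of a pointwise bound---no equality-case theorem for Riesz is invoked.
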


Thus, among all sets of a given measure, the ball has the smallest surface area, measured by $s(\cdot)$. This corollary implies also the isoperimetric inequality in the framework of sets of finite perimeter, since for any such set there is a sequence $(A_n)$ of sets with regular boundary whose perimeters converge to that of the given set and their perimeters coincide with $s(A_n)$; see, for instance, \cite{Ma}.

\begin{proof}
	We will prove that, for any $\epsilon>0$,
	\begin{equation}
		\label{eq:concentration}
		\left|\{\dist(\cdot\,,\R^d\setminus A)\geq\epsilon\}\right| \leq
		\left|\{\dist(\cdot\,,\R^d\setminus A^*)\geq\epsilon\}\right|.
	\end{equation}
	Subtracting this from $|A|=|A^*|$, dividing by $\epsilon$ and letting $\epsilon\to 0^+$, we obtain the claimed inequality.
	
	First, note that \eqref{eq:riesz} implies that for all $1\leq p\leq\infty$,
	\begin{equation}
		\label{eq:rieszconv}
		\|g*h\|_{L^p(A)} \leq \|g^**h^*\|_{L^p(A^*)} \,.
	\end{equation}
	Indeed, by \eqref{eq:riesz} and H\"older's inequality,
	\begin{align*}
		\left| \int_{A} f (g*h)\,dx \right| &  \leq \int_{A^*} f^* (g^**h^*)\,dx \leq \| f^* \|_{L^{p'}(A^*)} \left\|g^**h^* \right\|_{L^p(A^*)} \\
		& = \| f \|_{L^{p'}(A)} \|g^**h^*\|_{L^p(A^*)}  \,.
	\end{align*}
	
	Now, since $|B_\epsilon|^{-1}\1_{B_\epsilon} * \1_A = 1$ on $\{\dist(\cdot\,,\R^d\setminus A)\geq\epsilon\}$, we have for any $1\leq p<\infty$, by \eqref{eq:rieszconv},
	\begin{align*}
		\left|\{\dist(\cdot\,,\R^d\setminus A)\geq\epsilon\}\right| & \leq \left| \{ |B_\epsilon|^{-1}\1_{B_\epsilon} * \1_A \geq 1\} \cap A \right| \\
		& \leq \left\| |B_\epsilon|^{-1}\1_{B_\epsilon} * \1_A \right\|_{L^p(A)}^p \\
		& \leq \left\| |B_\epsilon|^{-1}\1_{B_\epsilon} * \1_{A^*} \right\|_{L^p(A^*)}^p \,.
	\end{align*}
	Since $|B_\epsilon|^{-1}\1_{B_\epsilon} * \1_{A^*}<1$ for $||x|-r_A|>\epsilon$ and $=1$ for $||x|-r_A|\leq\epsilon$, where $r_A = (|A|/\omega_d)^{1/d}$, we easily deduce, by dominated convergence, that
	$$
	\lim_{p\to\infty} \left\| |B_\epsilon|^{-1}\1_{B_\epsilon} * \1_{A^*} \right\|_{L^p(A^*)}^p 
	= \left|\{\dist(\cdot\,,\R^d\setminus A^*)\geq\epsilon\}\right| \,.
	$$
	This proves \eqref{eq:concentration}.
\end{proof}

Alternatively, one can first deduce the Brunn--Minkowski inequality (in the version of Brascamp and Lieb \cite{BrLi2}) from the Riesz rearrangement inequality (see \cite[Section 8.4]{Ba}) and then the isoperimetric inequality from the Brunn--Minkowski inequality. This is not circular since, although the Brunn--Minkowski inequality is used in the proof of the BLL theorem, in the special case of the Riesz rearrangement inequality \eqref{eq:riesz} its use can be avoided.

\subsubsection*{Application 2: A rearrangement inequality for the gradient}

Using the Riesz rearrangement inequality, Lieb \cite{LCho} gave a rather simple proof of the fact that the $L^2$-norm of the gradient of a function does not increase under rearrangement. Later, together with Almgren \cite{AlLi}, he showed that the Riesz rearrangement inequality also implies the corresponding fact for `fractional generalizations' of the $L^2$-norm of the gradient. We will present the proofs of these two important theorems in Sections \ref{sec:sob} and \ref{sec:fracsob}, respectively.

\subsubsection*{Application 3: An extension of Riesz's rearrangement inequality}

Berestycki and Lieb (see \cite[Theorem 2.2]{AlLi}) prove the following powerful extension of \eqref{eq:riesz}, which will be useful later on.

\begin{theorem}\label{bl}
	Let $F:[0,\infty)\times[0,\infty)\to [0,\infty)$ be continuous with $F(0,0)=0$ and
	$$
	F(u_2,v_2) + F(u_1,v_1) \geq F(u_2,v_1) + F(u_1,v_2)
	\qquad\text{for all}\ u_2\geq u_1\geq 0 \,,\ v_2\geq v_1\geq 0 \,,
	$$
	and let $0\leq W\in L^1(\R^d)$ and $a,b\in\R\setminus\{0\}$. Then, for all nonnegative functions $f,g$ on $\R^d$, vanishing at infinity,
	\begin{equation}
		\label{eq:bl}
		\iint_{\R^d\times\R^d} \!\!\! F(f(x),g(y))\,W(ax+by) \,dx\,dy \leq \iint_{\R^d\times\R^d} \!\!\! F(f^*(x),g^*(y))\,W^*(ax+by)\,dx\,dy \,.
	\end{equation}
\end{theorem}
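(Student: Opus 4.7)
\noindent\emph{Plan.} The approach is to mimic the reduction used to prove Theorem \ref{bl0}, substituting the Riesz rearrangement inequality \eqref{eq:riesz} for the simple inequality \eqref{eq:simplest}. First, by a routine mollification on $[0,\infty)^2$ that preserves both supermodularity and the normalization $F(0,0)=0$, I would reduce to the case $F\in C^2$ with $\partial^2 F/\partial u\partial v\ge 0$ and use the representation
$$ F(u,v)=F(u,0)+F(0,v)+\iint_{(0,\infty)^2}\frac{\partial^2 F}{\partial u\partial v}(s,t)\,\1_{(s,\infty)}(u)\,\1_{(t,\infty)}(v)\,ds\,dt. $$
Inserting this into the left-hand side of \eqref{eq:bl} and applying Fubini (legitimate by nonnegativity) splits the integral into two boundary terms and one cross term.

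For the boundary terms, integrating out the variable that does not occur in $F$ and using $\int_{\R^d}W(ax+by)\,dy=|b|^{-d}\|W\|_{L^1}$ (and its companion with $a$) gives
$$ \frac{\|W\|_{L^1}}{|b|^d}\int_{\R^d} F(f(x),0)\,dx+\frac{\|W\|_{L^1}}{|a|^d}\int_{\R^d} F(0,g(y))\,dy, $$
which is manifestly invariant under $(f,g,W)\mapsto(f^*,g^*,W^*)$, since $f$ and $f^*$ (resp.\ $g$ and $g^*$) are equimeasurable and $\|W\|_{L^1}=\|W^*\|_{L^1}$. The real content is therefore in the cross term, and it suffices by \eqref{eq:schwarzsuperlevel} to prove that for every $s,t>0$ and every pair $A,B\subset\R^d$ of finite-measure sets,
$$ \iint_{\R^d\times\R^d}\1_A(x)\,\1_B(y)\,W(ax+by)\,dx\,dy\le\iint_{\R^d\times\R^d}\1_{A^*}(x)\,\1_{B^*}(y)\,W^*(ax+by)\,dx\,dy. $$

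To obtain this three-function inequality from the Riesz inequality \eqref{eq:riesz}, I would perform the linear change of variables $u=ax$, $v=-by$ (Jacobian $|ab|^{-d}$, permissible since $a,b\neq 0$), which turns the kernel $W(ax+by)$ into the convolution kernel $W(u-v)$. The central symmetry of the ball $A^*$ gives $(\1_A(\cdot/a))^*(u)=\1_{|a|A^*}(u)=\1_{A^*}(u/a)$ and analogously for the $\1_B(-\cdot/b)$ factor, so dilation by a nonzero scalar commutes with symmetrization; hence \eqref{eq:riesz} applied to the three nonnegative functions $u\mapsto\1_A(u/a)$, $v\mapsto\1_B(-v/b)$, $W$ — followed by reversing the change of variables — yields the desired inequality. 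The main fiddly step is just this bookkeeping around the signs of $a$ and $b$, which is absorbed by the central symmetry of $A^*$, $B^*$, $W^*$; the approximation of $F$ and the Fubini manipulations are standard, and the Riesz inequality itself, which is quoted, does all of the genuinely geometric work.
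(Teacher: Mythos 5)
Your proposal is correct and follows essentially the same route as the paper: reduce to smooth supermodular $F$, use the integral representation to isolate the cross term, and handle that term by applying the Riesz rearrangement inequality \eqref{eq:riesz} to characteristic functions of superlevel sets, with the boundary terms being rearrangement-invariant. Your explicit change of variables to absorb general $a,b\neq 0$ (using that symmetrization commutes with dilations and reflections of centrally symmetric rearranged sets) is exactly the bookkeeping the paper leaves implicit.
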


The condition on $F$ is the same as in Theorem \ref{bl0}. Recall that, for twice continuously differentiable $F$, it is equivalent to $\frac{\partial^2}{\partial u\partial v}F\geq 0$.

The strategy of proof is also similar to that in Theorem \ref{bl0}, except that the role of the simplest rearrangement inequality \eqref{eq:simplest} is now played by the Riesz rearrangement inequality \eqref{eq:riesz}. Indeed, for $F(u,v)=uv$, $a=1=-b$, inequality  \eqref{eq:bl} reduces to \eqref{eq:riesz}. Conversely, using the integral representation of $F$ similarly as before, we can deduce \eqref{eq:bl} from \eqref{eq:riesz}.

We also note that Theorem \ref{bl0} is a limiting case of Theorem \ref{bl} for $W$ approaching a delta function. 

For a generalization of Theorems \ref{bll} and \ref{bl} to more than two functions see \cite{Ch0,Dr,Mo,BuSc,BuHa}.

\subsubsection*{Application 4: Existence of minimizers}

Lieb has used symmetrization techniques in various settings to prove the existence of optimizers in variational problems. Compactness methods in Lieb's work are reviewed in detail in \cite{Sa}, so we will be brief. 

The set-up is that one wants to minimize a functional $\mathcal E$ on a certain set $X$, which is bounded in $L^p(\R^d)$. It is assumed that for all $f\in X$ one has $f^*\in X$ and $\mathcal E[f^*]\leq \mathcal E[f]$. Therefore, in order to prove the existence of a minimum, one can choose a minimizing sequence $(f_n)$ consisting of symmetric decreasing functions; that is, $f_n^*=f_n$.

The important observation now is that, if $f\in L^p(\R^d)$ is symmetric decreasing, then
$$
f(x)^p \leq |\{ y:\ |y|< |x| \}|^{-1} \int_{|y|< |x|} f(y)^p\,dy \leq \omega_d^{-1} |x|^{-d} \|f\|_{L^p(\R^d)}^p \,,
$$
so we have the pointwise bound
$$
f(x) \leq \omega_d^{-1/p} |x|^{-d/p} \|f\|_{L^p(\R^d)}
\qquad\text{for all}\ x\in\R^d \,.
$$

Returning to the above minimizing sequence $(f_n)$, we have the bound $f_n(x)\lesssim |x|^{-d/p}$, uniformly in $n$. Therefore, we can apply the Helly selection principle and obtain a subsequence that converges pointwise in $\R^d\setminus\{0\}$. The remaining step is to show that this pointwise limit is not identically equal to zero. Once this is shown, it follows typically by rather soft arguments that the pointwise limit is a minimizer.

In many applications there is a $\tilde p\neq p$ such that $\|f_n\|_{L^{\tilde p}(\R^d)}$ remains bounded for a minimizing sequence. Then one also obtains the bound  $f_n(x)\lesssim |x|^{-d/\tilde p}$, uniformly in $n$. Using the two bounds on $f_n$ one can typically use dominated convergence and conclude that the pointwise limit is not identically equal to zero.

The above strategy was first carried out by Lieb in \cite{LCho} for the Choquard (or Pekar) functional
$$
\mathcal E[u] := \int_{\R^3} |\nabla u|^2\,dx - \iint_{\R^3\times\R^3} \frac{|u(x)|^2\,|u(y)|^2}{|x-y|}\,dx\,dy
$$
with $X= \{ u\in H^1(\R^d):\ \|u\|_{L^2(\R^d)}=1 \}$. It was also used by Lieb and Oxford \cite{LiOx} for
$$
\mathcal E[\rho] := - \|\rho\|_{L^p(\R^d)}^{-2+\theta} \iint_{\R^d\times\R^d} \frac{\rho(x)\,\rho(y)}{|x-y|^\lambda}\,dx\,dy 
$$
with $X=\{ \rho \in L^1\cap L^p(\R^d):\ \rho\geq 0 \,,\ \|\rho\|_{L^1(\R^d)}=1 \}$. Here $0<\lambda<d$, $p<(d+\lambda)/d$ and $\theta = ((2d-\lambda)p - 2d)/(d(p-1))$. (Actually, in \cite{LiOx} only the case $d=3$, $\lambda=1$ and $p=4/3$ is considered, but the argument generalizes immediately to the regime mentioned above.) Similar techniques occur in \cite[Theorem 2.5]{LHLS} and \cite{LiYa}.

We emphasize that symmetrization techniques typically prove the existence of a \emph{certain} minimizing sequence that converges; they do \emph{not} show that all minimizing sequences converge (up to symmetries). The latter property is also important in some applications. Lieb has developed other powerful techniques for that purpose; see \cite{Sa}.

\subsubsection*{Application 5: An isoperimetric inequality for the heat kernel}

A special case of Theorem \ref{bll}, which is due to \cite{FrLu} and motivated the general case, states that, with the notation $x_0=x_M$,
$$
\int_{\R^M} \prod_{m=1}^M f_m(x_m) h_m(x_m-x_{m-1}) \,dx \leq \int_{\R^M} \prod_{m=1}^M f_m^*(x) h_m^*(x_m-x_{m-1}) \,dx \,.
$$
This inequality was used by Luttinger to derive various isoperimetric inequalities. We now present one inequality in this spirit. If $\Omega\subset\R^d$ is a measurable set and $V$ is a real function on $\Omega$ satisfying $|\{ V < \sigma \}|<\infty$ for all $\sigma\in\R$, then we define a symmetric increasing function $V_*$ on $\Omega^*$ as follows. (Here $\Omega^*=\R^d$ if $\Omega$ has infinite measure; if $\Omega$ has finite measure, the assumption on $V$ is trivially satisfied.) Let $\phi:\R\to (0,\infty)$ be a continuous, strictly decreasing function (e.g., $\phi(v)=e^{-tv}$ for some $t>0$). Then, by assumption on $V$, $\phi(V)\1_\Omega$ can be considered as a function on $\R^d$ that vanishes at infinity, so its rearrangement is well defined and we can define a function $V_*$ on $\Omega^*$ by
$$
(\phi(V) \1_\Omega)^* = \phi(V_*)\1_{\Omega^*} \,.
$$
This function is independent of the choice of $\phi$.

In the following, for an open set $\Omega\subset\R^d$, $-\Delta_\Omega$ denotes the Laplacian with Dirichlet boundary conditions in $L^2(\Omega)$. If $V$ satisfies the above condition and is bounded from below, it is well known that $-\Delta_\Omega +V$ can be defined as a selfadjoint operator in $L^2(\Omega)$ and that this operator has a compact resolvent. One obtains the following rearrangement inequality for its heat trace.

\begin{corollary}\label{heat}
	Let $\Omega\subset\R^d$ be an open set and let $V$ be a real, bounded below function on $\Omega$ satisfying $|\{ V < \sigma \}|<\infty$ for all $\sigma\in\R$. Then, for all $t>0$,
	$$
	\Tr e^{-t(-\Delta_\Omega+V)} \leq \Tr e^{-t(-\Delta_{\Omega^*}+V_*)} \,.
	$$
\end{corollary}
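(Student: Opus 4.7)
The plan is to express both traces as limits of discrete path-integrals that fall directly into the scope of the special case of BLL stated at the beginning of this subsection. Write $p_s(x) := (4\pi s)^{-d/2} e^{-|x|^2/(4s)}$ for the free heat kernel on $\R^d$. The Trotter product formula, combined with cyclicity of the trace, gives $\Tr e^{-t(-\Delta_\Omega+V)} = \lim_{n\to\infty} I_n(\Omega,V)$, where
$$
I_n(\Omega,V) := \int_{\R^{dn}} \prod_{k=1}^n e^{-(t/n)V(x_k)}\,\1_\Omega(x_k)\,p_{t/n}(x_k-x_{k-1})\,dx_1\cdots dx_n
$$
with the convention $x_0 := x_n$; the analogous identity holds on $\Omega^*$. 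It therefore suffices to show $I_n(\Omega,V) \leq I_n(\Omega^*,V_*)$ for every $n$.

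The integrand of $I_n(\Omega,V)$ is exactly $\prod_{k=1}^n f_k(x_k)\,h_k(x_k-x_{k-1})$ with $f_k := e^{-(t/n)V}\1_\Omega$ and $h_k := p_{t/n}$, so the special case of BLL at the start of this subsection yields
$$
I_n(\Omega,V) \leq \int_{\R^{dn}} \prod_{k=1}^n f_k^*(x_k)\,h_k^*(x_k-x_{k-1})\,dx_1\cdots dx_n.
$$
Two simplifications now collapse the right-hand side onto the symmetrized problem. First, $p_{t/n}$ is already nonnegative, radial and nonincreasing, hence $h_k^* = h_k$. Second, the function $\phi(v) := e^{-(t/n)v}$ is continuous and strictly decreasing on $\R$, so the very definition of $V_*$ reads $(e^{-(t/n)V}\1_\Omega)^* = e^{-(t/n)V_*}\1_{\Omega^*}$; that is, $f_k^* = e^{-(t/n)V_*}\1_{\Omega^*}$. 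Thus the right-hand side equals $I_n(\Omega^*,V_*)$, establishing the per-$n$ inequality.

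The main technical obstacle is justifying $I_n \to \Tr$ rigorously, since strong operator convergence of the Trotter approximants does not automatically yield trace convergence. For $V$ bounded below the Feynman--Kac formula provides a clean route: the trace $\Tr e^{-t(-\Delta_\Omega+V)}$ equals $\int_\Omega E_x^x\bigl[e^{-\int_0^t V(b(s))\,ds}\,\1_{\{b([0,t])\subset \Omega\}}\bigr]\,dx$ over Brownian bridges $b$ from $x$ to $x$ in time $t$, and conditioning the bridge on its values at times $kt/n$ identifies $I_n(\Omega,V)$ with the Riemann-sum approximation of this expectation. The bound $V \geq -c$ dominates the exponential uniformly in $n$, so dominated convergence gives the limit. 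The hypothesis $|\{V<\sigma\}|<\infty$ ensures the same representation and limiting procedure apply to $-\Delta_{\Omega^*}+V_*$, closing the argument.
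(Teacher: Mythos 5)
Your per-$n$ step is exactly the paper's computation and is fine: applying the special case of the BLL theorem to $\prod_k f_k(x_k)h_k(x_k-x_{k-1})$ with $f_k=e^{-(t/n)V}\1_\Omega$, $h_k=p_{t/n}$, noting $h_k^*=h_k$ and $(e^{-(t/n)V}\1_\Omega)^*=e^{-(t/n)V_*}\1_{\Omega^*}$. The genuine gap is the claim that $\Tr e^{-t(-\Delta_\Omega+V)}=\lim_{n\to\infty}I_n(\Omega,V)$ for an \emph{arbitrary} open set $\Omega$. The quantity $I_n(\Omega,V)$ depends on $\1_\Omega$ only up to Lebesgue-null sets, whereas the Dirichlet Laplacian does not: remove from a nice bounded domain a compact set of zero Lebesgue measure but positive capacity (a point in $d=1$, a piece of a hyperplane in $d\geq 2$); every $I_n$ is unchanged, while $\Tr e^{-t(-\Delta_\Omega+V)}$ strictly decreases, so the asserted equality cannot hold for both domains. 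Your Feynman--Kac justification fails at precisely this point: requiring the bridge to lie in $\Omega$ only at the times $kt/n$ is strictly weaker than $b([0,t])\subset\Omega$, and the discrete constraints do not converge to the continuous one (a path can meet $\Omega^c$ without ever being in $\Omega^c$ at the fixed sample times). Moreover, for a merely measurable, bounded-below $V$, the Riemann sums $\frac tn\sum_k V(b_{kt/n})$ need not converge pathwise to $\int_0^t V(b_s)\,ds$, so dominated convergence has nothing to act on.

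The inequality you actually need on the $\Omega$ side, namely $\Tr e^{-t(-\Delta_\Omega+V)}\leq\liminf_n I_n(\Omega,V)$, is true, but it must be obtained differently: apply Trotter to the pair $e^{-\frac tn V}$, $e^{\frac tn\Delta_\Omega}$ (the Dirichlet semigroup itself), and then use the maximum principle to dominate its kernel, $e^{\frac tn\Delta_\Omega}(x,y)\leq\1_\Omega(x)\,p_{t/n}(x-y)\,\1_\Omega(y)$, which yields $\Tr\bigl(e^{-\frac tn V}e^{\frac tn\Delta_\Omega}\bigr)^n\leq I_n(\Omega,V)$. This domination step is exactly what the paper's proof contains and your argument omits; with it, no convergence statement is ever needed for the irregular set $\Omega$. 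An equality-type limit $\lim_n I_n(\Omega^*,V_*)=\Tr e^{-t(-\Delta_{\Omega^*}+V_*)}$ is required only on the symmetrized side, and there it does hold --- but precisely because $\Omega^*$ is a ball with smooth boundary (all boundary points regular, no zero-measure/positive-capacity discrepancy), a point the paper invokes explicitly and your sketch does not engage with. With the maximum-principle domination inserted on the $\Omega$ side and the regularity of the ball invoked on the $\Omega^*$ side, your argument becomes the paper's proof.
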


For a classification of the cases of equality in the corollary for $|\Omega|<\infty$ see \cite[Corollary 3.16]{Mo}.

\begin{proof}
	By the maximum principle, we have, for all $t>0$,
	$$
	e^{t\Delta_\Omega}(x,y) \leq \1_\Omega(x) e^{t\Delta}(x,y) \1_\Omega(y)
	\qquad\text{for all}\ x,y\in\Omega \,.
	$$
	Writing this in terms of the whole-space heat kernel $k_t(x-y) = e^{t\Delta}(x,y)$, which is a symmetric decreasing function of $|x-y|$, and applying the BLL theorem, we obtain, for all $t>0$ and all $n\in\N$,
	\begin{align*}
		\Tr \left( e^{-\frac tn V} e^{\frac tn\Delta_\Omega} \right)^n
		& \leq \Tr \left( e^{-\frac tn V}\1_\Omega e^{\frac tn\Delta} \right)^n \\
		& = \int_{\R^{d}}\cdots \int_{\R^d} \prod_{m=1}^n e^{-\frac tnV(x_m)}\1_\Omega(x_m) k_\frac tn(x_m-x_{m-1}) \,dx_1\ldots\,dx_n \\
		& \leq \int_{\R^{d}}\cdots \int_{\R^d} \prod_{m=1}^n e^{-\frac tnV_*(x_m)}\1_{\Omega^*}(x_m) k_\frac tn(x_m-x_{m-1}) \,dx_1\ldots\,dx_n \\
		& = \Tr \left( e^{-\frac tn V_*}\1_{\Omega^*} e^{\frac tn\Delta} \right)^n \,.
	\end{align*} 
	By Trotter's product formula, the left side converges to $\Tr e^{-t(-\Delta_\Omega+V)}$ as $n\to\infty$, while the right side converges, using in addition the smoothness of the boundary of $\Omega^*$, to $\Tr e^{-t(-\Delta_{\Omega^*}+V_*)}$.
\end{proof}

Luttinger \cite{Lu} derives several interesting inequalities from Corollary \ref{heat}. First, letting $t\to\infty$, one obtains for the lowest eigenvalues the inequality
$$
E(-\Delta_\Omega+V) \geq E(-\Delta_{\Omega^*} +V_*) \,.
$$
For $V=0$, this is the Faber--Krahn inequality, which proves Rayleigh's conjecture. Still in the case $V=0$, the heat trace has the asymptotics
$$
\Tr e^{t\Delta_\Omega} = (4\pi t)^{-d/2} \left( |\Omega| - (\pi t/4)^{1/2} \per\Omega + o(t^{1/2}) \right);
$$
see \cite{Ka} and, for Lipschitz domains, \cite{Br}. Thus, Corollary \ref{heat} implies the isoperimetric inequality $\per\Omega\geq\per\Omega^*$. For the derivation of further isoperimetric inequalities, for instance, for the capacity and the scattering length, we refer to \cite{Lu}.

\subsubsection*{Application 6: The sharp constant in Young's inequality}

Young's convolution inequality bounds the $L^p$-norm of a convolution of two functions in terms of the $L^p$-norms of the two functions. In some applications, for instance to Nelson's hypercontractivity theorem, it is important to know the sharp constant in this inequality.

This sharp constant was determined by Beckner \cite{Be} and Brascamp and Lieb \cite{BrLi}. The latter paper also characterizes the cases of equality. More precisely, the following theorem is proved there.

\begin{theorem}\label{young}
	Let $1\leq p,q,r\leq\infty$ with $1/p +1/q+1/r=2$. Then, for all $f\in L^p(\R^d)$, $g\in L^q(\R^d)$, $h\in L^r(\R^d)$,
	$$
	\left| \iint_{\R^d\times\R^d} f(x)g(x-y)h(y)\,dx\,dy \right| \leq (C_p C_q C_r)^d \, \|f\|_{L^p(\R^d)} \|g\|_{L^q(\R^d)} \|h\|_{L^r(\R^d)} \,,
	$$
	where $C_s = \left( s^{1/s} / s^{1/s'} \right)^{1/2}$ for $1<s<\infty$ and $C_s=1$ for $s\in\{1,\infty\}$. If $1<p,q,r<\infty$, then equality holds if and only if there are $A,B,C\in\C$, $a,b,c,k\in\R^d$ and a positive definite $J\in\R^{d\times d}$ such that, for a.e.\ $x\in\R^d$,
	\begin{align*}
		f(x) = A\, \exp( -p'(x-a,J(x-a))+ik\cdot x) \,, \\
		g(x) = B\, \exp(-q'(x-b,J(x-b)) - ik\cdot x) \,, \\
		h(x) = C\, \exp(-r'(x-c,J(x-c)) + ik\cdot x) \,.
	\end{align*}	
\end{theorem}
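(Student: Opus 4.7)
The plan is to combine the Riesz rearrangement inequality with a Brascamp--Lieb-style reduction to Gaussian inputs, concluded by an explicit Gaussian computation. As a preliminary reduction, replacing $f, g, h$ by $|f|, |g|, |h|$ leaves the $L^p$-norms unchanged and can only increase the modulus of the left-hand side. Then the Riesz rearrangement inequality \eqref{eq:riesz}, together with the invariance \eqref{eq:symmp} of $L^p$-norms under rearrangement, reduces the proof of the bound to the case that $f, g, h$ are nonnegative symmetric decreasing.

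The core of the argument is to reduce the symmetric decreasing case to Gaussians. Following Brascamp and Lieb, I would form the tensor products $F(x_1, x_2) = f(x_1) f(x_2)$ on $\R^{2d}$, and analogously $G, H$, and observe that
\begin{equation*}
\iint_{\R^{2d} \times \R^{2d}} F(x)\,G(x-y)\,H(y)\, dx\, dy = \biggl( \iint_{\R^d \times \R^d} f(x)\,g(x-y)\,h(y)\, dx\, dy \biggr)^2,
\end{equation*}
with $\|F\|_{L^p(\R^{2d})} = \|f\|_{L^p(\R^d)}^2$ and similarly for $G, H$, so the Young ratio in dimension $2d$ is the square of the ratio in dimension $d$. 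For an orthogonal $R \in O(2d)$ that mixes the two $\R^d$-factors, the change of variables $x \mapsto Rx$, $y \mapsto Ry$ shows that the trilinear integral is invariant under $F \mapsto F \circ R$ (and similarly for $G, H$), as are the $L^p$-norms. Applying the symmetric decreasing rearrangement in $\R^{2d}$ to $F \circ R, G \circ R, H \circ R$ and invoking \eqref{eq:riesz} in dimension $2d$, the Young ratio cannot decrease. Iterating this ``rotate-then-rearrange'' procedure over a suitable sequence of angles is essentially a central-limit-type averaging and yields a sequence of configurations converging to centered Gaussians on $\R^{2d}$, along which the Young ratio is non-decreasing.

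Finally, for inputs of the Gaussian form displayed in the theorem, the integrand is itself Gaussian on $\R^d \times \R^d$; by completing the square, integrating, and optimizing the Gaussian parameters subject to $1/p + 1/q + 1/r = 2$, one recovers the constant $(C_p C_q C_r)^d$. Combined with the previous step, this proves the sharp bound in general. For the equality characterization when $1 < p, q, r < \infty$, I would track strictness in each step: the reduction to $|f|, |g|, |h|$ produces the phase factors $e^{\pm i k \cdot x}$; the Riesz step forces $|f|, |g|, |h|$ to be translates of symmetric decreasing functions with a compatible centering; and the Brascamp--Lieb iteration forces the resulting symmetric decreasing functions to be fixed points of the procedure, which by the convergence step are exactly Gaussians of the claimed form. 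The main obstacle is the iterated rearrangement step: one must make rigorous the convergence of the rearranged tensor products to Gaussians on $\R^{2d}$ (strongly enough to pass to the limit in the trilinear functional), identify the limit precisely as a Gaussian whose parameters are dictated by the $L^p$-norms and the exponents, and extract a sufficiently sharp equality analysis to recover the full family of extremizers.
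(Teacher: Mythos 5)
Your opening reductions are fine (passing to $|f|,|g|,|h|$, applying Riesz to reduce to symmetric decreasing functions, and the tensorization identity with the squaring of the Young ratio), and they are indeed the first steps of the Brascamp--Lieb argument that the paper invokes. The genuine gap is the ``rotate-then-rearrange'' iteration in the fixed space $\R^{2d}$. After you apply the symmetric decreasing rearrangement in $\R^{2d}$ once, the resulting functions are radial on $\R^{2d}$; every rotation $R\in O(2d)$ then acts trivially on them, so the next rotate-then-rearrange step changes nothing. The iteration is stationary after one step, and the fixed points of your procedure are \emph{all} radial decreasing functions on $\R^{2d}$, not just Gaussians, so no central-limit-type convergence to Gaussians can be extracted this way. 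Nor can you re-tensorize inside $\R^{2d}$, since the rearranged function is no longer a product. The actual mechanism, which is the one the paper sketches, is to duplicate the variables $M$ times and apply the BLL/Riesz inequality in $\R^{2dM}$ (equivalently, in dimension $dM$ for each function), letting $M\to\infty$: the point is the quantitative fact that an $M$-fold tensor power, after rearrangement in the high-dimensional space, looks like a Gaussian as $M\to\infty$, and this requires a lemma you neither state nor prove. A fixed-dimension iteration of the competing-symmetries type would need a second transformation that genuinely breaks radial symmetry (as the conformal inversion does in the HLS setting); a generic rotation does not.

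The equality analysis is also not obtainable by ``tracking strictness'' through your scheme. Because the sharp constant is only reached in the limit $M\to\infty$ (no step of the limiting argument is an identity at a putative extremizer), strictness does not propagate backwards; this is the same phenomenon the paper points out for Theorem \ref{rearrgrad}, where a limiting proof gives no information on equality cases. Moreover, equality in the Riesz step does not by itself force $|f|,|g|,|h|$ to be translates of their rearrangements unless one has additional structure (strict symmetric decrease of one factor as in Theorem \ref{strict}, or Burchard's analysis), and recovering the precise Gaussian family with the common matrix $J$, the linked centers, and the conjugate phase factors $e^{\pm ik\cdot x}$ requires a separate argument. So the bound could in principle be completed along the paper's $M\to\infty$ duplication route, but both the convergence-to-Gaussians step and the characterization of extremizers are missing as written.
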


In fact, Brascamp and Lieb \cite{BrLi} prove a much more general family of inequalities, whose optimal constants are achieved for Gaussians and which came to be known as Brascamp--Lieb inequalities. This family has found important applications in several areas of mathematics and we refer to \cite{Zh} for details on one such application.

The basic strategy in the proof of Theorem \ref{young} is to duplicate the variables a large number $M$ of times and then to apply the BLL theorem in $\R^{2dM}$. The proof makes use of the fact that, to quote from \cite{BrLi}, ``for large $M$, all spherically symmetric, decreasing functions look like Gaussians in some sense''. For details of the proof, as well as for a sharp reversed inequality for nonnegative functions with $0<p,q,r<1$ we refer to \cite{BrLi}. For alternative proofs of Theorem \ref{young}, as well as generalizations, see \cite{LiGau,Bar0,Bar,Bar1,CaLiLo,BCCT}.

\medskip

Let us mention one more application of the BLL theorem due to Lenzmann and Sok \cite{LeSo}. They consider the map $f\mapsto \mathcal F^{-1}((\mathcal F f)^*)$, where $\mathcal F$ is the Fourier transform, and deduce from Theorem \ref{bll} that $\|f\|_{L^p(\R^d)} \lesssim \| \mathcal F^{-1}((\mathcal F f)^*) \|_{L^p(\R^d)}$, provided that $p$ is an even integer or $+\infty$. They use this fact to deduce symmetry results for ground states of higher order PDEs. The paper \cite{LeSo} has also some subtle results concerning cases of equality and makes a connection to classical works by Hardy and Littlewood.


\subsection{Strict rearrangement inequality}

An important special case of the Riesz rearrangement inequality occurs when one of the functions ($g$, say) is \emph{strictly} symmetric decreasing; that is, $g(x)>g(y)$ if $|x|<|y|$. 

\begin{theorem}\label{strict}
	Let $f,g,h$ be nonnegative functions on $\R^d$, vanishing at infinity, and assume that $g$ is strictly symmetric decreasing. If there is equality in \eqref{eq:riesz}, then there is an $a\in\R^d$ such that, for a.e. $x\in\R^d$, $f(x)=f^*(x-a)$ and $h(x)=h^*(x-a)$.
\end{theorem}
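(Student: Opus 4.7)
The plan is to reduce equality in the Riesz rearrangement inequality to a rigidity statement for pairs of measurable sets via the layer cake representation, and then to prove that rigidity using iterated Steiner symmetrization together with the strict monotonicity of $g$.

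Using the layer cake decompositions $f = \int_0^\infty \1_{\{f>s\}}\,ds$ and $h = \int_0^\infty \1_{\{h>t\}}\,dt$, I would expand both sides of \eqref{eq:riesz} and apply Fubini to obtain
\begin{equation*}
\iint f(x) g(x-y) h(y)\,dx\,dy = \int_0^\infty\!\!\int_0^\infty \iint \1_{\{f>s\}}(x)\, g(x-y)\, \1_{\{h>t\}}(y)\,dx\,dy\,ds\,dt
\end{equation*}
and the analogous identity for the starred side, with $\{f>s\}^*$ and $\{h>t\}^*$ in place of the level sets. The starred integrand dominates the unstarred one pointwise in $(s,t)$ by \eqref{eq:riesz} applied to each pair of characteristic functions, so equality of the outer integrals forces equality a.e.\ in $(s,t) \in (0,\infty)^2$. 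This reduces the theorem to the following two-set rigidity: for any measurable $A, B \subset \R^d$ of positive finite measure, equality in
\begin{equation*}
\iint \1_A(x)\, g(x-y)\, \1_B(y)\,dx\,dy \leq \iint \1_{A^*}(x)\, g(x-y)\, \1_{B^*}(y)\,dx\,dy
\end{equation*}
forces some $a \in \R^d$ (a priori depending on $A, B$) with $A = a + A^*$ and $B = a + B^*$ modulo null sets.

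This two-set rigidity is the main obstacle. My approach would be through iterated Steiner symmetrization: since $g$ is radial, Steiner symmetrization $S_e$ in any direction $e \in \Sph^{d-1}$ preserves the Riesz integral, and on each pair of lines parallel to $e$ the integral reduces to a one-dimensional Riesz inequality whose kernel $s \mapsto g(\sqrt{|x'-y'|^2 + s^2})$ is still strictly symmetric decreasing. Granting the one-dimensional rigidity — where strict decrease of the kernel forces each pair of slices to be intervals with a common midpoint — one Steiner-symmetrizes in a countable collection of directions whose iterates converge to the spherical rearrangement, as used in the proof of Theorem \ref{bll}, to conclude that $A$ and $B$ must each be balls. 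A short additional computation, showing that $c \mapsto \iint_{(c + A^*) \times B^*} g(x-y)\,dx\,dy$ is strictly maximized at $c = 0$ by using the strict radial decrease of $g$ twice in succession, fixes the two centers to coincide. The rigorous execution of both the one-dimensional rigidity and the convergence of the iterated Steiner symmetrizations is substantial, and is the subject of Burchard's detailed analysis of the equality cases, which I would invoke as a black box rather than reproduce here.

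For the final consistency step, observe that the translation vector produced at a.e.\ level pair $(s,t)$ is simultaneously the center of $\{f > s\}$, a quantity depending only on $s$, and the center of $\{h > t\}$, depending only on $t$; it must therefore equal a single constant $a \in \R^d$ on a full-measure set of pairs. Hence for a.e.\ $s > 0$ one has $\{f > s\} = a + \{f > s\}^* = \{f^*(\cdot - a) > s\}$, so $f(x) = f^*(x - a)$ a.e., and the identical argument applied to $h$ yields $h(x) = h^*(x - a)$ a.e.
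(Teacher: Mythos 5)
Your opening reduction via the layer cake representation (together with $g^*=g$ a.e.) and your closing argument that the translation vector cannot depend on the level pair are both sound, modulo one caveat you should state: the levelwise equalities only follow if the common value of the two sides of \eqref{eq:riesz} is finite, which is an implicit hypothesis of the theorem. The genuine gap is the middle step. The two-set rigidity you isolate is not a corollary of anything you are entitled to quote; it \emph{is} the mathematical content of the theorem. Burchard's theorem \cite{Bu} concerns the case where all three functions are characteristic functions and all three are rearranged; it requires a strict triangle inequality among $|A|^{1/d},|B|^{1/d},|C|^{1/d}$, its conclusion allows homothetic ellipsoids rather than balls, and it contains neither the one-dimensional rigidity for a \emph{fixed} strictly decreasing kernel nor any statement about equality surviving the limit of iterated Steiner symmetrizations. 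The one-dimensional lemma your slicing argument needs --- equality for two sets against a strictly symmetric decreasing kernel forces two intervals with a common midpoint --- is exactly the theorem being proved in $d=1$ for characteristic functions, i.e. the crux of the matter; deferring it, together with the nontrivial bookkeeping (each Steiner iterate of the pair is a common translate of the original pair, and convergence in measure of these translates to the balls identifies $A$ and $B$ as concentric balls), to an inapposite black box leaves the proposal without a proof of its key step. Also, Steiner symmetrization does not ``preserve'' the Riesz integral in general; what you may use is that it does not decrease it, so that the assumed global equality forces equality at each Steiner step.

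For comparison: the paper does not prove Theorem \ref{strict}; it attributes it to Lieb \cite{LCho}, whose argument is self-contained and predates \cite{Bu}, and points to \cite{Cama} for an alternative proof via the equality cases in Brunn--Minkowski, so a route through Burchard is historically inverted, though not circular. If you do insist on using \cite{Bu} as a black box, there is a cleaner way than Steiner symmetrization: layer-cake the kernel as well, $g=\int_0^\infty \1_{\{g>u\}}\,du$, so that equality holds for a.e.\ triple of levels $(s,t,u)$. Since strict radial decrease forces $g>0$ on all of $\R^d$, the centered balls $\{g>u\}$ attain every radius and $u\mapsto|\{g>u\}|$ is continuous, so for a.e.\ admissible $(s,t)$ one can choose a level $u$ in the full-measure equality set at which the strict triangle inequality of \cite{Bu} holds for the triple $\{f>s\}$, $\{g>u\}$, $\{h>t\}$. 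Burchard's conclusion makes these three sets homothetic ellipsoids with compatible centers; because the middle set is a ball centered at the origin, the ellipsoid must be a ball and the centers of $\{f>s\}$ and $\{h>t\}$ must coincide. That yields your two-set rigidity honestly, and your final constancy argument then finishes the proof as written.
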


This is due to Lieb \cite{LCho}. For an alternative proof, based on the characterization of cases of equality in the Brunn--Minkowski theorem, see \cite{Cama}.

The characterization of cases of equality in the Riesz rearrangement inequality in the case where all three functions are characteristic functions of sets of finite measure is due to Burchard \cite{Bu}. As far as we know, the equality cases in the BLL theorem for characteristic functions are only understood in certain cases; see \cite{ChFl,Ch1,ChON}.

Let us give an application, due to Lieb \cite{LHLS}, of his Theorem \ref{strict}.

\subsubsection*{Application: The sharp constant in the Hardy--Littlewood--Sobolev inequality}

The Har\-dy--Littlewood--Sobolev (HLS) inequality is the generalization of Young's inequality where one function ($g$, say) is an inverse power of the absolute value, so $g(x) = |x|^{-\lambda}$. This function does not lie in any $L^q(\R^d)$, but it only barely fails to do so for $q=d/\lambda$. The HLS inequality shows that a bound similar to Young's inequality remains valid, provided that $p,r$ avoid the endpoints $1$ and $\infty$.

The relevant quantity $\iint_{\R^d\times\R^d} f(x) |x-y|^{-\lambda} h(y)\,dx\,dy$ appears in several applications in physics. For instance, in the case $\lambda=1$ in $d=3$ it has the interpretation as a Coulomb interaction between charge densities $f$ and $h$. For some of these applications, it is useful to have good constants in the HLS inequality.

In the special case $p=r$, Lieb \cite{LHLS} was able to compute the sharp constant and to characterize all optimizers. He proved the following result.

\begin{theorem}\label{hls}
	Let $0<\lambda<d$ and $p=2d/(2d-\lambda)$. Then, for all $f,h\in L^p(\R^d)$,
	$$
	\left| \iint_{\R^d\times\R^d} f(x) |x-y|^{-\lambda} h(y)\,dx\,dy \right| \leq C_{\lambda,d} \, \|f\|_{L^p(\R^d)} \|h\|_{L^p(\R^d)} \,,
	$$
	where
	$$
	C_{\lambda,d} = \pi^{\frac \lambda2} \frac{\Gamma(\frac{d-\lambda}2)}{\Gamma(d-\frac\lambda2)} \left( \frac{\Gamma(d)}{\Gamma(\frac d2)} \right)^{1-\frac \lambda d} .
	$$
	Equality holds if and only if there are $A,B\in\C$, $a\in\R^d$ and $\gamma\in (0,\infty)$ such that, for a.e.\ $x\in\R^d$,
	\begin{align*}
		f(x) & = A (\gamma^2 + |x-a|^2)^{-(2d-\lambda)/2} \,, \\
		h(x) & = B (\gamma^2 + |x-a|^2)^{-(2d-\lambda)/2} \,.
	\end{align*}
\end{theorem}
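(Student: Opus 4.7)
The plan follows Lieb's strategy, which exploits the fact that $p=2d/(2d-\lambda)$ is the \emph{conformal} exponent: the ratio $I(f,h)/(\|f\|_p\|h\|_p)$, where $I(f,h):=\iint f(x)|x-y|^{-\lambda}h(y)\,dx\,dy$, is invariant under the entire Möbius group of $S^d$ via stereographic projection, not merely under translations, rotations, and dilations of $\R^d$. First, I would reduce to maximizing $I(f,f)$ over nonnegative symmetric decreasing $f$ with $\|f\|_p=1$. The triangle inequality yields $|I(f,h)|\le I(|f|,|h|)$; the Riesz rearrangement inequality \eqref{eq:riesz} applied with the already symmetric decreasing middle factor $|x-y|^{-\lambda}$, together with \eqref{eq:symmp}, reduces further to nonnegative symmetric decreasing $f,h$; and since $|x|^{-\lambda}$ is a positive-definite kernel for $0<\lambda<d$ (its Fourier transform is a positive multiple of $|\xi|^{\lambda-d}$), Cauchy--Schwarz gives $I(f,h)\le\sqrt{I(f,f)\,I(h,h)}$, reducing the problem to the diagonal maximization.

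Second, I would establish existence of a maximizer by the strategy of Application 4. Take a symmetric decreasing maximizing sequence $(f_n)$ with $\|f_n\|_p=1$, and use the dilation invariance of the problem to normalize $\int_{|x|\le 1}f_n^p\,dx=\tfrac12$. The uniform pointwise bound $f_n(x)\le\omega_d^{-1/p}|x|^{-d/p}$ combined with Helly's selection principle gives a pointwise a.e.\ convergent subsequence with nonzero limit $f$ (the normalization prevents mass from escaping to $0$ or $\infty$), and dominated convergence applied with the integrable majorant $\omega_d^{-2/p}|x|^{-d/p}|x-y|^{-\lambda}|y|^{-d/p}$ then shows that $f$ attains the supremum.

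Third, I would identify all maximizers using conformal invariance. Inverse stereographic projection $\pi^{-1}:\R^d\to S^d$ together with the chord-length identity $|\pi^{-1}(x)-\pi^{-1}(y)|^2=4|x-y|^2/((1+|x|^2)(1+|y|^2))$ shows that the $L^p$-isometry $F(\zeta):=((1+|x|^2)/2)^{(2d-\lambda)/2}f(\pi(\zeta))$ transforms $I(f,f)$ into $\iint_{S^d\times S^d}F(\zeta)|\zeta-\eta|^{-\lambda}F(\eta)\,dS(\zeta)\,dS(\eta)$, a functional now invariant under the full Möbius group of $S^d$. Given a maximizer $F$ on $S^d$, pre-composing with a Möbius transformation that moves any prescribed point of $S^d$ to the south pole yields another maximizer whose $\R^d$-presentation must, by the strict rearrangement inequality (Theorem \ref{strict}, applicable because $|x-y|^{-\lambda}$ is strictly decreasing in $|x-y|$), be symmetric decreasing about some translate. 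Tracking this symmetry back to $S^d$ forces $F$ to be axially symmetric about every axis through the origin, hence constant. Pulling back, the constants on $S^d$ correspond precisely to $f(x)=A(\gamma^2+|x-a|^2)^{-(2d-\lambda)/2}$ once translation and dilation parameters are restored, and the sharp constant is obtained by evaluating $I(f_0,f_0)/\|f_0\|_p^2$ on $f_0(x)=(1+|x|^2)^{-(2d-\lambda)/2}$, a standard Beta--Gamma function calculation.

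The main obstacle I anticipate is this last conformal-rearrangement argument: one must combine Möbius conjugation carefully with Theorem \ref{strict} to force $F$ to be constant on $S^d$, verifying in particular that the strict equality case of the Riesz inequality is actually applicable (which requires nontriviality of $F$ and genuine strict monotonicity of the kernel). Equality characterization in the complex case then follows by tracking equality backwards: the triangle inequality $|I(f,h)|\le I(|f|,|h|)$ forces aligned constant phases, absorbed into the complex constants $A,B\in\C$; Cauchy--Schwarz forces $|f|$ proportional to $|h|$; and the conformal argument combined with Theorem \ref{strict} fixes the common modulus to lie in the stated conformal family $(\gamma^2+|x-a|^2)^{-(2d-\lambda)/2}$.
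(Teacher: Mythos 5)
Your overall architecture (reduce to nonnegative symmetric decreasing $f=h$ via the Riesz inequality and positive definiteness of $|x|^{-\lambda}$, prove existence of a maximizer, then identify it through the interplay of the strict rearrangement inequality of Theorem \ref{strict} with conformal rotations on $S^d$) is the strategy the paper attributes to Lieb. But your existence step has a genuine gap, and it is exactly the point the paper singles out as the hard one. With $p=2d/(2d-\lambda)$ one has $d/p=d-\lambda/2$, so your proposed majorant $\omega_d^{-2/p}|x|^{-d+\lambda/2}\,|x-y|^{-\lambda}\,|y|^{-d+\lambda/2}$ is homogeneous of degree $-2d$ in $(x,y)\in\R^d\times\R^d$; it is scale invariant and therefore \emph{not} integrable (it diverges logarithmically both near the origin and at infinity), so dominated convergence cannot be applied. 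This is precisely the paper's remark that, unlike Application 4, there is no second exponent $\tilde p$ furnishing a complementary decay bound, and that even after the pointwise limit is known to be nonzero it is not obvious that it attains the supremum. Lieb closes this gap with what became the Brezis--Lieb lemma and the ``method of the missing mass''; alternatively one can bypass the existence question entirely via the Carlen--Loss competing-symmetries argument. Your dilation normalization is a sensible start but does not by itself substitute for these tools.

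A secondary soft spot is the last step on the sphere: a translate of a symmetric decreasing function on $\R^d$ does \emph{not} lift, under inverse stereographic projection, to a function on $S^d$ that is axially symmetric about an axis through the center (this happens only for the untranslated case, since the images of the concentric balls are caps centered on a common axis only when the center of the balls is the origin). Hence the conclusion ``axially symmetric about every axis, hence constant'' does not follow as stated; Lieb's identification argument (expanded in Carlen--Loss) extracts from the compatibility of rearrangement with conformal rotation a more delicate analysis that pins down exactly the family $(\gamma^2+|x-a|^2)^{-(2d-\lambda)/2}$, and you would need to supply that step. The remaining ingredients --- the reduction via Riesz and Cauchy--Schwarz, the treatment of complex phases in the equality case, and the Beta--Gamma evaluation of $C_{\lambda,d}$ on $f_0(x)=(1+|x|^2)^{-(2d-\lambda)/2}$ --- are fine and consistent with the paper's outline.
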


Let us briefly comment on the proof. The first problem that Lieb solves in \cite{LHLS} is to prove the existence of an optimizer for the inequality in Theorem \ref{hls} (and even in its generalization with two different exponents $p\neq r$). This is considerably more difficult than for the minimization problems discussed in the previous subsection (Application 4). One of the reasons is that there is no second exponent $\tilde p$ to take advantage of. Also, even after having shown that the pointwise limit is not identically equal to zero, it is not obvious that it is a minimizer. It is in this connection that Lieb derived what became known as Brezis--Lieb lemma and applied what has been called the `method of the missing mass'. For an exposition of this proof we refer to \cite{Sa}.

Having shown the existence of optimizers, the task is to identify them. At this point the above-mentioned \emph{strict} rearrangement inequality comes in. Moreover, Lieb has the ingenious insight that the corresponding optimization problem has a `hidden symmetry', namely it is conformally invariant. The proof revolves around understanding the interplay between rearrangement and conformal rotation. As a consequence of the strict rearrangement inequality, any optimizer needs to be strictly symmetric decreasing and, as a consequence of the conformal invariance, optimizers can be rotated in the stereographically transformed version of the problem. Lieb shows that the functions given in the theorem are the only ones compatible with both the operation of rearrangement and that of conformal rotation. This point of view is further expanded and generalized in \cite{CaLo} and follow-up papers.


\subsection{Beyond symmetrization}

We hope that the material in this section has demonstrated the power of symmetrization techniques and Lieb's masterful use of them. However, there are a number of optimization problems where such techniques do not seem to be useful. One class of examples comes from optimization problems for non-scalar quantities (vector fields, spinor fields, etc.). A further example is the HLS inequality on the Heisenberg group. There the sharp constant could be determined by other techniques; see \cite{FrLi4} and, for a translation of these techniques to the Euclidean case, see \cite{FrLi3}. Another symmetri\-zation-free route to the sharp HLS inequality is in \cite{FrLi1,FrLi2}. It is based on reflection positivity, a technique that Lieb has used with great success in problems, for instance, in statistical mechanics, as discussed in \cite{BjUe,FeGi,Ti}. Among further non-symmetrization techniques that have proved useful in determining sharp constants in functional inequalities we mention optimal transport (see, e.g., \cite{Bar1,CENV}) and nonlinear flow methods (see, e.g., \cite{CaLiLo,CaCaLo,BCCT,DoEsLo}).


\section{Rearrangement in fractional Sobolev spaces}\label{sec:fracsob}

So far, we have discussed rearrangement for functions without regularity assumptions. In this and the next section we discuss rearrangement in Sobolev spaces. We begin with the case of fractional Sobolev spaces, which is, in some sense, closer to the material of the previous section.

The following rearrangement inequality slightly generalizes \cite[Corollary 2.3]{AlLi}.

\begin{theorem}\label{albl}
	Let $j$ be a nonnegative, convex function on $[0,\infty)$ with $j(0)=0$. Then, for any nonnegative functions $u,v,W$ on $\R^d$, vanishing at infinity, and $a,b\in\R\setminus\{0\}$,
	\begin{align*}
		& \iint_{\R^d\times\R^d} j(|u(x)-v(y)|) \,W(ax+by) \,dx\,dy \\
		& \quad \geq \iint_{\R^d\times\R^d} j(|u^*(x)-v^*(y)|) \,W^*(ax+by)\,dx\,dy \,.
	\end{align*}
\end{theorem}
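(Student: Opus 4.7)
The proof will mirror the derivation of Corollary \ref{expand} from Theorem \ref{bl0}, with Theorem \ref{bl} playing the role of Theorem \ref{bl0}. The device is to apply Theorem \ref{bl} to the function
$$
F(s,t) := j(s) + j(t) - j(|s-t|), \qquad s,t\geq 0,
$$
and then cancel off the additive pieces $j(s)$ and $j(t)$, whose contributions to the double integral against $W(ax+by)$ are rearrangement-invariant.

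First, I would check that $F$ satisfies the hypotheses of Theorem \ref{bl}. Since $j$ is nonnegative and convex with $j(0)=0$, it is nondecreasing on $[0,\infty)$, so $j(|s-t|)\leq j(\max(s,t))\leq j(s)+j(t)$, giving $F\geq 0$; continuity and $F(0,0)=0$ are immediate. For the supermodularity condition, a direct computation gives
$$
F(u_2,v_2)+F(u_1,v_1)-F(u_2,v_1)-F(u_1,v_2) = j(|u_2-v_1|)+j(|u_1-v_2|)-j(|u_2-v_2|)-j(|u_1-v_1|),
$$
and the right side is nonnegative because the one-variable function $x\mapsto j(|x|)$ is convex on $\R$ (it is the composition of a nondecreasing convex function with $|\cdot|$). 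Indeed, this is exactly the submodularity of $(s,t)\mapsto j(|s-t|)$ which is used in the proof of Corollary \ref{expand}.

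Next, I would apply Theorem \ref{bl} to $F$ with $f=u$, $g=v$, weight $W$, and the given $a,b$, obtaining
$$
\iint F(u(x),v(y))\, W(ax+by)\,dx\,dy \leq \iint F(u^*(x),v^*(y))\, W^*(ax+by)\,dx\,dy. \tag{$\star$}
$$
The terms arising from $j(u(x))$ and $j(v(y))$ in the expansion of $F$ are rearrangement-invariant: by Fubini and the substitution $z=ax+by$ in the $y$-integral for fixed $x$,
$$
\iint j(u(x))\,W(ax+by)\,dx\,dy = |b|^{-d}\,\|W\|_{L^1}\int_{\R^d} j(u)\,dx,
$$
and both factors are preserved under $(u,W)\mapsto (u^*,W^*)$, using equimeasurability and the fact that $j$ is nondecreasing Borel. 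The same holds for the $j(v(y))$ term by symmetry. Subtracting these invariant quantities from both sides of $(\star)$ yields the claimed inequality.

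The main technical obstacle is that this subtraction presupposes finiteness of $\int j(u)\,dx$, $\int j(v)\,dx$, and $\|W\|_{L^1}$. To handle the general case, I would first prove the inequality for truncations $u_n:=(u-1/n)_+\wedge n$, $v_n:=(v-1/n)_+\wedge n$, $W_n:=(W-1/n)_+\wedge n$, which are bounded and supported on sets of finite measure thanks to the vanishing-at-infinity hypothesis; since the truncation map $t\mapsto (t-1/n)_+\wedge n$ is nondecreasing and continuous, one has $u_n^*=(u^*-1/n)_+\wedge n$ and similarly for $v,W$, so the inequality for the truncated data is legitimate. Passing to the limit $n\to\infty$ via monotone convergence on both sides (the integrands $j(|u_n-v_n|)W_n(\cdot)$ are not monotone in $n$, so one would combine Fatou on the right with monotone convergence of $j(u_n),j(v_n),W_n$ on the left, or alternatively work with the minimum-type reformulation $F=u+v-\max(u,v)+\ldots$ to extract monotonicity) recovers the full statement. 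This limiting argument is routine but is the one step that requires care.
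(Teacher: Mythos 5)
Your proposal is correct and takes essentially the same route as the paper: both apply Theorem \ref{bl} with $F(s,t)=j(s)+j(t)-j(|s-t|)$ and dispose of integrability issues by truncation and a limiting argument. The only difference is bookkeeping: the paper truncates only $W$ (leaving the cancellation of the $j(u)$, $j(v)$ terms implicit), whereas you also truncate $u$ and $v$ to justify that subtraction; your limit step does close, and on the untruncated side it is even simpler than you suggest, since the truncation map is nondecreasing and $1$-Lipschitz, so $j(|u_n-v_n|)\,W_n \le j(|u-v|)\,W$ pointwise and no convergence argument is needed there, with Fatou on the rearranged side finishing the proof.
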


\begin{proof}
	Since $j$ is nondecreasing, we have $j(|u(x)-v(y)|)\geq j(||u(x)|-|v(y)||)$, so it suffices to consider nonnegative $u,v$. Moreover, by considering $\min\{(W-\epsilon)_+,M\}$ and using monotone convergence, we may assume that $W\in L^1(\R^d)$. In this case, the theorem is a consequence of Theorem \ref{bl} with $F(u,v):=j(u)+j(v)-j(|u-v|)$.
\end{proof}

In particular for $j(t)=t^p$, $W(x)=|x|^{-d-sp}$ and $a=-b=1$, we obtain the following rearrangement inequality for the seminorm in the fractional Sobolev space $\dot W^{s,p}(\R^d)$.

\begin{corollary}\label{fracrearr}
	Let $1\leq p<\infty$ and $0<s<1$. Then, for any nonnegative function $u$ on $\R^d$, vanishing at infinity,
	$$
	\iint_{\R^d\times\R^d} \frac{|u(x)-u(y)|^p}{|x-y|^{d+sp}}\,dx\,dy \geq \iint_{\R^d\times\R^d} \frac{|u^*(x)-u^*(y)|^p}{|x-y|^{d+sp}}\,dx\,dy \,.
	$$
\end{corollary}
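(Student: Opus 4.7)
The plan is to read off Corollary \ref{fracrearr} as a direct specialization of Theorem \ref{albl}. Specifically, I would apply Theorem \ref{albl} with $j(t) = t^p$, $W(x) = |x|^{-d-sp}$, $a = 1$, $b = -1$, and $v = u$. The hypotheses require only routine verification: for $p \geq 1$ the function $t \mapsto t^p$ is nonnegative, convex on $[0,\infty)$, and vanishes at $0$; and $W$ is nonnegative and vanishes at infinity, since for every $\tau > 0$ the superlevel set $\{|x|^{-d-sp} > \tau\}$ is the open ball of radius $\tau^{-1/(d+sp)}$ and hence has finite measure.

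Two further observations collapse the conclusion of Theorem \ref{albl} onto the desired inequality. First, $W$ is already radially symmetric and strictly decreasing in $|x|$, so $W^* = W$. Second, with $a = 1$ and $b = -1$ one has $ax + by = x - y$, and therefore $W(ax + by) = |x-y|^{-d-sp}$ appears identically on both sides. Substituting these choices into Theorem \ref{albl} yields exactly the inequality asserted in the corollary.

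I do not expect a substantive obstacle here; the statement is essentially an unpacking of Theorem \ref{albl}. The only minor point of care is that the chosen $W$ does not lie in $L^1(\R^d)$, whereas the proof of Theorem \ref{albl} passes through an $L^1$ reduction via the truncation $\min\{(W-\epsilon)_+, M\}$ and monotone convergence. That reduction is already internal to Theorem \ref{albl} and applies to the present $W$ without modification, so no additional work is required on our part.
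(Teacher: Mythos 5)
Your proposal is correct and follows exactly the paper's route: Corollary \ref{fracrearr} is obtained there as the specialization of Theorem \ref{albl} with $j(t)=t^p$, $W(x)=|x|^{-d-sp}$ and $a=-b=1$ (so $W^*=W$), with the $L^1$ truncation of $W$ handled inside the proof of Theorem \ref{albl}, just as you note. (The paper's separate argument for $p=2$ via the heat-kernel/Riesz computation is only an illustrative alternative, not the proof of the general statement.)
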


To recapitulate, Corollary \ref{fracrearr} follows via Theorem \ref{albl} from Theorem \ref{bl}, which, in turn, is a consequence of the Riesz rearrangement inequality. The underlying mechanism is most clearly seen for $p=2$.

\begin{proof}[Proof of Corollary \ref{fracrearr} for $p=2$]
	By monotone convergence, it suffices to prove the inequality with $W_\epsilon(z) := (|z|^2+\epsilon^2)^{-(d+2s)/2}$ for $\epsilon>0$, so that $W_\epsilon\in L^1(\R^d)$. Then
	\begin{align*}
		\iint_{\R^d\times\R^d} |u(x)-u(y)|^2 W_\epsilon(x-y) \,dx\,dy & = 2 \int_{\R^d} |u(x)|^2 \,dx \int_{\R^d} W_\epsilon(z)\,dz \\
		& \quad - 2\re \iint_{\R^d\times\R^d} \overline{u(x)} W_\epsilon(x-y) u(y)\,dx\,dy \,.
	\end{align*}
	The first term on the right side is invariant under rearrangement and, by Riesz, the double integral in the second term does not go down.
\end{proof}

The fractional perimeter of a measurable set $A\subset\R^d$ is defined by
$$
\per_s A := \int_A \int_{\R^d\setminus A} \frac{dx\,dy}{|x-y|^{d+sp}} \,.
$$
Since $\per_s A = (1/2) \iint_{\R^d\times\R^d} |\1_A(x)-\1_A(y)| |x-y|^{-d-s}\,dx\,dy$, we deduce from Co\-rollary \ref{fracrearr} that, if $A$ has finite measure,
\begin{equation}
	\label{eq:fracisop}
	\per_s A \geq \per_s A^* \,.
\end{equation}
This is known as the `fractional isoperimetric inequality'.

One can show \cite{Da} that $\lim_{s\to 1_-} (1-s) \per_s E= c_d \per E$ for some constant $c_d\in(0,\infty)$, and therefore from \eqref{eq:fracisop} one recovers the usual isoperimetric inequality. In some sense, the parameter $s<1$ in \eqref{eq:fracisop} plays a similar role as the parameter $\epsilon>0$ in the proof of Corollary \ref{isoper}.

The cases of equality in Corollary \ref{fracrearr} and, thus in \eqref{eq:fracisop}, were characterized in \cite{FrSe}.

We end this section with another theorem of Almgren and Lieb, whose importance will probably become clearer in the next section. It concerns continuity of the rearrangement map $u\mapsto u^*$. According to Corollary \ref{fracrearr}, this map is Lipschitz continuous at the zero function. Since it is nonlinear, its boundedness at the zero function does not imply continuity. We do know that it is continuous on $L^p(\R^d)$, $1\leq p<\infty$, as a consequence of the first inequality in \eqref{eq:expand}. The following theorem shows continuity in the fractional Sobolev space $W^{s,p}(\R^d)$, which, by definition, consists of all $u\in L^p(\R^d)$ for which the quantity on the left side of the inequality in Corollary \ref{fracrearr} is finite.

\begin{theorem}\label{alfrac}
	Let $d\geq 1$, $1\leq p<\infty$ and $0<s<1$. Then the rearrangement map $u\mapsto u^*$ is continuous on $W^{s,p}(\R^d)$.
\end{theorem}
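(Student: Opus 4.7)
By the first inequality in \eqref{eq:expand} the rearrangement is nonexpansive on $L^p$, so $\|u_n^*-u^*\|_{L^p}\le\|u_n-u\|_{L^p}\to 0$. Via the standard "every subsequence has a convergent further subsequence" device, I may (and do) pass to a subsequence along which $u_n^*\to u^*$ a.e.

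\textbf{Step 2 (Brezis--Lieb reduction to seminorm convergence).} Introduce the signed difference quotients
$$G_n(x,y):=\bigl(u_n^*(x)-u_n^*(y)\bigr)|x-y|^{-(d+sp)/p}$$
and $G$ analogously. Then $G_n\to G$ a.e.\ on $\R^{2d}$, while $\|G_n\|_{L^p(\R^{2d})}^p=[u_n^*]_{W^{s,p}}^p\le[u_n]_{W^{s,p}}^p$ is uniformly bounded by Corollary~\ref{fracrearr}. The Brezis--Lieb lemma gives
$$[u_n^*]_{W^{s,p}}^p-[u_n^*-u^*]_{W^{s,p}}^p\to [u^*]_{W^{s,p}}^p,$$
so it suffices to prove $[u_n^*]_{W^{s,p}}^p\to[u^*]_{W^{s,p}}^p$. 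The lower bound $\liminf[u_n^*]_{W^{s,p}}^p\ge[u^*]_{W^{s,p}}^p$ is Fatou applied to $|G_n|^p$.

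\textbf{Step 3 (Uniform integrability through a localized rearrangement inequality).} For the matching upper bound I plan to show that $\{|G_n|^p\}$ is uniformly integrable in $L^1(\R^{2d})$; Vitali's theorem with a.e.\ convergence then yields $\|G_n\|_{L^p}^p\to\|G\|_{L^p}^p$. Since $u_n\to u$ in $W^{s,p}$, the pre-rearrangement densities $h_n(x,y):=|u_n(x)-u_n(y)|^p|x-y|^{-d-sp}$ converge in $L^1(\R^{2d})$ and hence form a uniformly integrable family. I transfer this to $\{|G_n|^p\}$ near the diagonal by applying Theorem~\ref{bl} with $F(a,b)=a^p+b^p-|a-b|^p$ to the symmetric decreasing kernel $W_{\epsilon,R}(z):=(|z|^2+\epsilon^2)^{-(d+sp)/2}\mathbf 1_{|z|<1/R}$, which lies in $L^1(\R^d)$. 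Letting $\epsilon\to 0$ by monotone convergence yields the localized inequality
$$\iint_{|x-y|<1/R}|G_n|^p\,dx\,dy\le\iint_{|x-y|<1/R}h_n\,dx\,dy,$$
whose right-hand side is uniformly small in $n$ for $R$ large. At large distances ($|x-y|>R$) the crude bound $|a-b|^p\le 2^p(a^p+b^p)$ together with $L^p$-boundedness of $\{u_n^*\}$ and integrability of $|z|^{-d-sp}$ at infinity provides uniform smallness; for $\max(|x|,|y|)>K$ in the medium-distance range, the $L^p$-tightness of $\{u_n^*\}$ (inherited from $L^p$-convergence) suffices since the kernel is then bounded.

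\textbf{Main obstacle.} The critical ingredient is the localized rearrangement inequality in Step 3: because the truncated kernels $W_{\epsilon,R}$ remain symmetric decreasing and $L^1$, Theorem~\ref{bl} applies and permits us to pull uniform integrability of $\{h_n\}$ through the nonlinear rearrangement to $\{|G_n|^p\}$. I avoid the superficially attractive route of trying to establish lower semicontinuity of the rearrangement deficit $[u]_{W^{s,p}}^p-[u^*]_{W^{s,p}}^p$ on $W^{s,p}$ (it is a difference of a continuous and a merely lower semicontinuous functional, so no obvious semicontinuity is available), and instead carry out the argument at the level of integrands.
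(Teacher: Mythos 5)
The paper itself gives no proof of Theorem \ref{alfrac} — it only cites \cite[Theorem 9.2]{AlLi} — so your argument must stand on its own, and it does: the outline is correct and uses only tools the paper provides. The decisive ingredient, your localized inequality, is sound: the truncated kernel $\1_{\{|z|<1/R\}}|z|^{-d-sp}$ (or its $\epsilon$-regularization) is symmetric decreasing, so Theorem \ref{bl} — or even more directly Theorem \ref{albl} with $j(t)=t^p$, since this $W$ vanishes at infinity — gives $\iint_{|x-y|<1/R}|G_n|^p\,dx\,dy\le\iint_{|x-y|<1/R}h_n\,dx\,dy$, and $h_n\to h$ in $L^1(\R^{2d})$ because $u_n\to u$ in $W^{s,p}$, so the right-hand side is indeed uniformly small in $n$ once $R$ is large. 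Two points in the assembly should be made explicit: on the infinite-measure space $\R^{2d}$, Vitali needs tightness in addition to uniform integrability (your far and medium-far estimates supply it), and uniform integrability on the remaining bounded region $\{1/R\le|x-y|\le R,\ \max(|x|,|y|)\le K\}$ should be recorded — it follows from that of $\{|u_n^*|^p\}$, the kernel being bounded by $R^{d+sp}$ there. In fact you can streamline so that the a.e.\ subsequence, Brezis--Lieb and Vitali steps all disappear: on the whole middle region $\{1/R\le|x-y|\le R\}$ the kernel is at most $R^{d+sp}$ and each slice $\{y:\,|x-y|\le R\}$ has volume $\omega_dR^d$, so the $L^p$-norm of $G_n-G$ over that region is controlled by a constant (depending on $R$) times $\|u_n^*-u^*\|_{L^p(\R^d)}$, which tends to zero by nonexpansivity \eqref{eq:expand}; combining this with the uniform smallness of the near-diagonal and far contributions of both $|G_n|^p$ and $|G|^p$ (your Step 3, applied also to $u$ itself) bounds $[u_n^*-u^*]_{W^{s,p}}^p$ directly. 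Either way the argument goes through, so your proposal is a correct, self-contained replacement for the citation.
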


This is \cite[Theorem 9.2]{AlLi}.


\section{Rearrangement in Sobolev spaces}\label{sec:sob}

\subsection{Rearrangement inequalities for the gradient}

For $1\leq p\leq\infty$ we denote by $\dot W^{1,p}(\R^d)$ the set of $u\in L^1_\loc(\R^d)$ whose distributional gradient belongs to $L^p(\R^d)$.

\begin{theorem}\label{rearrgrad}
	Let $d\geq 1$ and $1\leq p\leq\infty$. If $u\in\dot W^{1,p}(\R^d)$ vanishes at infinity, then $u^*\in\dot W^{1,p}(\R^d)$ and
	$$
	\|\nabla u^* \|_{L^p(\R^d)} \leq \|\nabla u\|_{L^p(\R^d)} \,.
	$$
\end{theorem}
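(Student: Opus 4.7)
My plan is to first reduce to nonnegative $u$ using $u^* = |u|^*$ together with the standard Sobolev chain rule $|\nabla|u||\leq|\nabla u|$ a.e. For $1\leq p<\infty$, I would then combine the fractional Sobolev rearrangement inequality of Corollary \ref{fracrearr} with the Bourgain--Brezis--Mironescu characterization of $\dot W^{1,p}$:
$$
\lim_{s\to 1^-}(1-s)\iint_{\R^d\times\R^d}\frac{|v(x)-v(y)|^p}{|x-y|^{d+sp}}\,dx\,dy = K_{d,p}\int_{\R^d}|\nabla v|^p\,dx,
$$
valid for suitable $v$ with a known positive constant $K_{d,p}$, and interpreted as $+\infty$ on both sides when $v\notin \dot W^{1,p}$.

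The key point is then immediate. By Corollary \ref{fracrearr}, the double integral for $u^*$ is dominated by that for $u$ for every $s\in(0,1)$; multiplying both sides by $1-s$ and passing to $\liminf$ as $s\to 1^-$, the right-hand side tends to $K_{d,p}\|\nabla u\|_{L^p}^p<\infty$, which by the converse direction of BBM forces $u^*\in\dot W^{1,p}$ with $\|\nabla u^*\|_{L^p}\leq\|\nabla u\|_{L^p}$.

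For $p=\infty$, I would argue by approximation. If $u$ is $L$-Lipschitz and vanishes at infinity, then $u_k := (u-1/k)_+$ is $L$-Lipschitz with compact support, so $\|\nabla u_k\|_{L^q}\leq L\,|\supp u_k|^{1/q}$ for every finite $q$. The case just established gives $\|\nabla u_k^*\|_{L^q}\leq L\,|\supp u_k|^{1/q}=L\,|\supp u_k^*|^{1/q}$, and letting $q\to\infty$ yields $\|\nabla u_k^*\|_{L^\infty}\leq L$. Pointwise convergence $u_k^*\to u^*$ together with lower semicontinuity of the Lipschitz constant finally gives $\|\nabla u^*\|_{L^\infty}\leq L$.

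The main obstacle is that the BBM formula is a nontrivial external input. A more self-contained alternative, which is Lieb's original approach in \cite{LCho}, treats $p=2$ directly via the heat semigroup: the identity $\|\nabla u\|_{L^2}^2 = \lim_{t\to 0^+}t^{-1}(\|u\|_{L^2}^2 - \langle u, e^{t\Delta}u\rangle)$, combined with the Riesz inequality \eqref{eq:riesz} applied to $u$, $u$ and the symmetric decreasing heat kernel $k_t$, gives $\langle u, e^{t\Delta}u\rangle\leq\langle u^*, e^{t\Delta}u^*\rangle$ while $\|u\|_{L^2}=\|u^*\|_{L^2}$; the claim then falls out on dividing by $t$ and sending $t\to 0^+$. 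Finding an equally direct semigroup argument that works uniformly across all $p\in[1,\infty]$ would be the principal difficulty in avoiding BBM altogether.
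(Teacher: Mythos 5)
Your heat-semigroup paragraph for $p=2$ is exactly the argument the paper presents (the paper proves only the $p=2$ case this way and refers to the literature for general $p$), so that part is fine. Your main route --- Corollary \ref{fracrearr} plus the Bourgain--Brezis--Mironescu limit --- is a genuinely different and legitimate strategy for $1<p<\infty$, and your $p=\infty$ argument (truncate, use large finite $q$, let $q\to\infty$, and pass to the pointwise limit of the Lipschitz functions $u_k^*$) is sound. Two points, however, need attention.

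First, and this is a genuine gap: at the endpoint $p=1$ the ``converse direction of BBM'' does not say what you need. The correct statement (BBM together with D\'avila's result, the paper's reference \cite{Da}) is that for $v\in L^1$ the finiteness of $\liminf_{s\to1^-}(1-s)\iint |v(x)-v(y)|\,|x-y|^{-d-s}\,dx\,dy$ characterizes $v\in BV(\R^d)$, not $v\in \dot W^{1,1}(\R^d)$, and the limit computes the total variation $|Dv|(\R^d)$; your reading ``$+\infty$ on both sides when $v\notin\dot W^{1,p}$'' is false for $p=1$ (e.g.\ $v=\1_B$ gives a finite limit). So for $p=1$ your argument only yields $u^*\in BV$ with $|Du^*|(\R^d)\leq\|\nabla u\|_{L^1}$. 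Upgrading this to $u^*\in\dot W^{1,1}$, i.e.\ excluding a jump or Cantor part of $Du^*$, is precisely the delicate point of the $W^{1,1}$ case and requires a separate argument (for instance, via the coarea formula one rules out flat stretches of the distribution function of $|u|$, and the absolute continuity of the radial profile needs its own proof); it does not follow from the BBM limit.

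Second, a smaller but real issue: the BBM formula (in both directions) is stated for functions in $L^p$ (or in $W^{1,p}$), whereas your $u\in\dot W^{1,p}$ is only assumed to vanish at infinity, and your reduction step only replaces $u$ by $|u|$. You should first pass to the truncations $\min\{(|u|-\epsilon)_+,M\}$, which are bounded with finite-measure support and hence lie in every $L^q$, apply your argument there, and then recover the general statement by monotone convergence (plus, for the membership $u^*\in\dot W^{1,p}$, weak compactness of the bounded gradients when $p>1$). This is the same preliminary reduction the paper itself performs in its $p=2$ proof, and without it the application of BBM to $u$ and $u^*$ is not justified as written.
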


Results of this flavor go back to Faber and Krahn in their proof of Rayleigh's conjecture, namely that the lowest eigenvalue of the Dirichlet Laplacian among all open sets of a given measure is minimal for a ball. The technique of symmetrization was applied to other optimization problems of isoperimetric nature by P\'olya and Szeg\H{o} \cite{PoSz}. Those concern, for instance, the capacity or the torsional rigidity. We have briefly discussed some of these inequalities in Section \ref{sec:bll} above (Application 5).

The fact that in these applications the rearrangement inequality is applied to optimizers, which are typically smooth, is probably a reason why originally not much attention was paid to the precise regularity assumptions under which they are valid. The assumptions in Theorem \ref{rearrgrad} are natural and rather minimal. Results in that direction appeared in the late 1960s and 1970s, sometimes connected with the problem of finding the optimal constant in the Sobolev inequality \cite{Ro,Au,Ta}. In \cite{Hi}, Theorem \ref{rearrgrad} is proved under the additional assumption that $u\in L^p(\R^d)$ (which can be easily removed by considering $\min\{(|u|-\epsilon)_+,M\}$). These proofs typically use a substantial amount of geometric measure theory.

In \cite{LCho}, Lieb gave a short proof of Theorem \ref{rearrgrad} in the special case $p=2$, based on Riesz's rearrangement inequality and avoiding any geometric measure theory. We emphasize that this case is by far the most important one in applications. It appears, for instance, in the above-mentioned isoperimetric inequalities. We sketch this argument.

\begin{proof}[Proof of Theorem \ref{rearrgrad} for $p=2$]
	First, we notice that it suffices to prove the theorem under the additional assumption that $u\in L^2(\R^d)$. Indeed, otherwise we apply the inequality to $\min\{(|u|-\epsilon)_+,M\}$. By the chain rule for Sobolev functions, this function belongs to $\dot W^{1,2}(\R^d)$ and the length of its gradient is pointwise bounded by $|\nabla u|$. By the vani\-shing-at-infinity assumption, it belongs to $L^2(\R^d)$. Applying the rearrangement inequality for $H^1$-functions to this function and using monotone convergence as $\epsilon\to 0^+$ and $M\to\infty$, we obtain the claimed result for $u$.
	
	For $u\in L^2(\R^d)$, one easily verifies, for instance using Fourier transforms, that $\nabla u\in L^2(\R^d)$ if and only if 
	$$
	\limsup_{t\to 0^+} t^{-1} \left( \|u\|_{L^2(\R^d)}^2 - (u,e^{t\Delta}u) \right)<\infty
	$$
	and, in this case,
	$$
	\lim_{t\to 0} t^{-1} \left( \|u\|_{L^2(\R^d)}^2 - (u,e^{t\Delta}u) \right) = \|\nabla u\|^2_{L^2(\R^d)} \,.
	$$
	
	Since $e^{t\Delta}$ acts as convolution with a symmetric decreasing function, it follows from Riesz's inequality \eqref{eq:riesz} that $(u,e^{t\Delta}u)\leq (u^*,e^{t\Delta}u^*)$. The claimed rearrangement inequality for gradients now follows immediately from the above characterization of $H^1(\R^d)$.
\end{proof}

Let us make three remarks concerning Theorem \ref{rearrgrad}. First, there is another, relatively elementary proof of this theorem in \cite[Theorem 3.20]{Ba}. Second, there is a proof of the isoperimetric inequality that has some similarities with the above argument \cite{Le}. Third, since there is a limiting argument involved, the above proof does not give any information about the cases of equality in Theorem \ref{rearrgrad}. This is, indeed, a rather subtle question; see, e.g., \cite{BrZi,CiFu}.

We also would like to stress that, apart from its use to solve certain variational problems (meaning, for instance, finding optimal constants), Theorem \ref{rearrgrad} is also useful to prove the existence of minimizers for variational problems, similarly as in Section~\ref{sec:bll} (Application 4).


\subsection{(Non)continuity of rearrangement in $W^{1,p}$}

We now present some deep results by Almgren and Lieb \cite{AlLi} concerning the continuity of the rearrangement map $u\mapsto u^*$; see also the summaries \cite{AlLi1,AlLi2}. Theorem \ref{rearrgrad} shows that this map is (Lipschitz) continuous at the zero function but, since it is nonlinear, this does not imply continuity in general.

Before the work of Almgren and Lieb, Coron \cite{Co} had proved that, in one dimension, the rearrangement map is continuous in $W^{1,p}(\R)$ for any $1<p<\infty$. Therefore it came as quite a surprise when Almgren and Lieb showed that in dimensions $d\geq 2$ the map is, in general, not continuous in $W^{1,p}(\R^d)$ for any $1\leq p<\infty$. Moreover, they showed not only this, but even characterized the points where the map is continuous.

Let us describe the results of Almgren and Lieb. Following them, we denote by $\mathcal C_1$ the set of nonnegative functions $u$ on $\R^d$ that vanish at infinity and whose distributional gradient belongs to $L^1(A)$ for any set $A\subset\R^d$ of finite measure. For $u\in\mathcal C_1$ we introduce the \emph{residual distribution function}
$$
\mathcal G_u(\tau) := |\{ u>\tau \}\cap\{ \nabla u = 0\}|
\qquad\text{for}\ \tau>0 \,.
$$
This defines a nonincreasing function and so its distributional derivative is a nonpositive Radon measure on $(0,\infty)$. A function $u\in\mathcal C_1$ is called \emph{co-area regular} if the absolutely continuous part (with respect to Lebesgue measure) of this measure vanishes. Otherwise, it is called \emph{co-area irregular}.

Intuitive examples of functions with flat spots have a residual distribution function with jump discontinuities. Quite surprisingly, Almgren and Lieb showed that, in $d\geq 2$, there are functions with derivatives vanishing on sets of positive Lebesgue measure, but for which the residual distribution function is absolutely continuous. This will be discussed below. The connection between co-area regularity and continuity of the rearrangement map is as follows.

\begin{theorem}\label{alli}
	Let $d\geq 1$ and $1\leq p<\infty$. Then the rearrangement map $u\mapsto u^*$ is continuous in $W^{1,p}(\R^d)$ precisely at those $u$ such that $|u|$ is co-area regular.
\end{theorem}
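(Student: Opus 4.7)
The plan is to prove the two directions of the equivalence separately, after the standard reduction to nonnegative $u$ (which is justified by the continuity of $u\mapsto|u|$ on $W^{1,p}(\R^d)$ and by the identity $u^*=|u|^*$). In either direction, nonexpansiveness of rearrangement on $L^p$ (the first inequality in \eqref{eq:expand}) gives $u_n^*\to u^*$ in $L^p(\R^d)$ whenever $u_n\to u$ in $L^p(\R^d)$, and Theorem \ref{rearrgrad} ensures that $u_n^*$ is bounded in $W^{1,p}(\R^d)$, hence converges weakly to $u^*$ after extracting a subsequence. Since weak convergence plus norm convergence implies strong convergence in $L^p$, continuity in $W^{1,p}(\R^d)$ therefore reduces to proving $\|\nabla u_n^*\|_{L^p}\to\|\nabla u^*\|_{L^p}$.

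For the sufficiency direction, the strategy is to express $\|\nabla u^*\|_{L^p}^p$ as an explicit level-set integral. A direct chain-rule computation for the radial function $u^*$ yields an identity of the form
$$
\|\nabla u^*\|_{L^p(\R^d)}^p = (d\omega_d^{1/d})^p \int_0^\infty \frac{\mu_u(\tau)^{p(d-1)/d}}{m_u(\tau)^{p-1}} \, d\tau,
$$
where $\mu_u(\tau):=|\{|u|>\tau\}|$ and $m_u(\tau)$ is the Radon--Nikodym density of the absolutely continuous part of $-d\mu_u/d\tau$. On the other hand, the co-area formula applied to $u$, combined with H\"older's inequality on level sets and the isoperimetric inequality, yields the same identity but with $m_u$ replaced by the analogous density built from $\mu_u-\mathcal{G}_u$; the two densities coincide precisely when the absolutely continuous part of $\mathcal{G}_u$ vanishes, i.e.\ when $|u|$ is co-area regular. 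This recovers Theorem \ref{rearrgrad} as an inequality and makes it an equality at co-area regular $|u|$. From here, continuity along a $W^{1,p}$-convergent sequence with co-area regular limit follows from pointwise a.e.\ convergence $\mu_{u_n}\to\mu_u$ (from $L^p$-convergence of $|u_n|$) together with convergence of the absolutely continuous parts of $-d\mu_{u_n}/d\tau$, the latter established by a Dunford--Pettis argument using strong $L^p$-convergence of $|\nabla u_n|$. A Fatou/dominated-convergence combination then pushes this through the level-set functional.

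For the necessity direction, suppose $|u|$ is co-area irregular, so that $-d\mathcal{G}_u/d\tau$ has a nontrivial absolutely continuous component on some interval $[\tau_0,\tau_1]$. Equivalently, there is a set $S\subset\{\nabla u=0\}$ of positive measure on which $u$ takes a continuum of values. I would construct an explicit perturbation $v_n$ supported in a neighbourhood of $S$, oscillating on scale $1/n$ tangentially to the level sets, with $\|v_n\|_{W^{1,p}}\to 0$ but such that the oscillations destroy the hidden flatness in a controlled way. After rearrangement, the erased flatness manifests as a macroscopic increase in $\|\nabla(u+v_n)^*\|_{L^p}$ that persists in the limit, yielding $\liminf_n\|\nabla(u+v_n)^*\|_{L^p}>\|\nabla u^*\|_{L^p}$ and so a failure of continuity at $u$. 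The construction, in the spirit of Almgren--Lieb, uses a foliation of $S$ by transverse level sets of an auxiliary function together with a staircase-like modulation along that foliation, designed so that $\mu_{u+v_n}$ converges to $\mu_u$ but with absolutely continuous decrement not matched by that of $\mu_u$.

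The main obstacle is the Dunford--Pettis step in the sufficiency direction: one must rule out that the absolutely continuous parts of $-d\mu_{u_n}/d\tau$ concentrate in the limit on atoms or on the singular support of $-d\mu_u/d\tau$, which would produce a strictly larger limiting value of the level-set functional and break continuity. Co-area regularity of the limit is precisely the hypothesis that rules out this leakage, since without it the functional above is only lower semicontinuous, not continuous, under $W^{1,p}$-convergence; this lower-semicontinuity gap is exactly the mechanism exploited by the counterexample in the necessity direction, so the two halves of the proof mirror each other through the same functional.
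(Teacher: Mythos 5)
The first thing to say is that the paper itself contains no proof of Theorem \ref{alli}: it is quoted as a deep result of Almgren and Lieb, with the proof in \cite{AlLi}, where it occupies the bulk of a very long paper; so your proposal must be judged against that argument. Your outline does identify the right objects: the distribution function $\mu_u$, the representation $\|\nabla u^*\|_{L^p}^p=(d\omega_d^{1/d})^p\int_0^\infty \mu_u(\tau)^{p(d-1)/d}\,m_u(\tau)^{1-p}\,d\tau$ with $m_u$ the a.c.\ density of $-d\mu_u$, and the role of the absolutely continuous part of $-d\mathcal G_u$. But both halves stop exactly where the real work begins. In the sufficiency direction, the ``Dunford--Pettis argument'' is not a proof: by the co-area formula the densities $m_{u_n}$ are governed by quantities of the form $\int_{\{u_n=\tau\}}|\nabla u_n|^{-1}\,d\mathcal{H}^{d-1}$, i.e.\ by \emph{reciprocals} of the gradient on level sets, and strong $L^p$ convergence of $\nabla u_n$ gives neither equi-integrability in $\tau$ nor any bound preventing mass of $-d\mu_{u_n}$ from leaking into the singular part in the limit; showing that co-area regularity of the limit (and nothing less) excludes this leakage is precisely the long chain of lemmas in \cite{AlLi}, and the functional you write down is only lower semicontinuous, whereas continuity requires the upper bound $\limsup_n\|\nabla u_n^*\|_{L^p}\le\|\nabla u^*\|_{L^p}$. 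Two of your stated reductions are also wrong as written: co-area regularity does \emph{not} make Theorem \ref{rearrgrad} an equality (a smooth nonradial bump is co-area regular with strict inequality; what is true is only that the level-set lower bound for $\|\nabla u\|_{L^p}^p$ and the representation of $\|\nabla u^*\|_{L^p}^p$ then involve the same density), and ``weak convergence plus norm convergence implies strong convergence'' fails in $L^1$, while the theorem is asserted for $p=1$ as well.

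In the necessity direction there is a more structural flaw: your ``equivalently'' is false. Co-area irregularity is strictly stronger than the existence of a positive-measure subset of $\{\nabla u=0\}$ on which $u$ takes a continuum of values, because $\mathcal G_u$ may decrease in a purely singular-continuous (Cantor-like) way; such a $u$ is co-area \emph{regular}, and by the theorem itself rearrangement is continuous there. A perturbation scheme that only exploits a flat set spread over a continuum of values would therefore ``prove'' discontinuity at some co-area regular points, contradicting the sufficiency half; the construction must be keyed specifically to the absolutely continuous part of $-d\mathcal G_u$. Moreover, the crucial assertion that the oscillating perturbations $v_n$, with $\|v_n\|_{W^{1,p}}\to 0$, force $\liminf_n\|\nabla(u+v_n)^*\|_{L^p}>\|\nabla u^*\|_{L^p}$ is exactly the quantitative statement that must be proved, not assumed --- and even exhibiting a single co-area irregular function (Proposition \ref{coareairreg}) is already a nontrivial construction in \cite{AlLi}. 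As it stands, your text is a plausible roadmap that mirrors the broad mechanism of Almgren--Lieb, but it defers the essential estimates in both directions to unproven claims and contains the incorrect reductions noted above, so it does not constitute a proof.
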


This theorem motivates one to study the class of co-area regular functions. The following result says that sufficiently smooth functions are co-area regular; see \cite[Theorems 5.2 and 5.4]{AlLi}

\begin{proposition}\label{coareareg}
	If $d=1$, then any function $u\in\mathcal C_1$ is co-area regular. If $d\geq 2$ and $u\in\mathcal C_1 \cap C_\loc^{d-1,1}(\R^d)$, then $u$ is co-area regular.
\end{proposition}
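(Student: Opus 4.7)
The strategy is to reformulate co-area regularity as a statement about a pushforward measure and then reduce the proposition to the Morse--Sard theorem. Write $S := \{\nabla u = 0\}$ and introduce the Borel measure $\nu$ on $(0,\infty)$ defined by
$$
\nu(E) := \bigl|\{x \in S : u(x) \in E\}\bigr|.
$$
Then $\mathcal{G}_u(\tau) = \nu((\tau,\infty))$, so $\nu$ is precisely the distributional derivative of $-\mathcal{G}_u$ on $(0,\infty)$. Hence $u$ is co-area regular if and only if $\nu$ is singular with respect to one-dimensional Lebesgue measure. Since $\nu$ is concentrated on the set of critical values $u(S) \subset \R$, the problem reduces to showing that $u(S)$ has one-dimensional Lebesgue measure zero.

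For $d = 1$, the assumption $u \in \mathcal{C}_1$ says $u' \in L^1_{\loc}(\R)$, which, combined with vanishing at infinity, means that the continuous representative of $u$ is locally absolutely continuous. The standard consequence $|u(A)| \leq \int_A |u'(x)|\,dx$ of absolute continuity, valid for every measurable $A \subset \R$, gives $|u(S)| = 0$ upon taking $A = \{u' = 0\}$, settling the one-dimensional case.

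For $d \geq 2$, the equality $|u(S)| = 0$ is the Morse--Sard theorem for the scalar-valued map $u$, applied at the sharp regularity threshold $C^{d-1,1}$. The standard proof proceeds along the filtration $S = S_1 \supset S_2 \supset \cdots \supset S_{d-1}$, where $S_k := \{x : D^j u(x) = 0\ \text{for}\ j=1,\ldots,k\}$: Lipschitz continuity of $D^{d-1}u$ supplies the Taylor remainder estimate $|u(y) - P_{x_0}(y)| \leq L |y-x_0|^d$, with $P_{x_0}$ the order-$(d-1)$ Taylor polynomial of $u$ at $x_0$. Combining this with the structural vanishing of $P_{x_0} - u(x_0)$ at each level of the filtration, together with a Vitali-type covering of each $S_k \setminus S_{k+1}$ by cubes of adapted sidelength, one deduces $|u(S_k \setminus S_{k+1})| = 0$ for every $k$ and $|u(S_{d-1})| = 0$, hence $|u(S)| = 0$. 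The main obstacle is precisely the execution of this sharp Morse--Sard argument at the $C^{d-1,1}$ regularity; the H\"older exponent $1$ on $D^{d-1}u$ cannot be relaxed, as Whitney's classical counterexamples at $C^{d-1}$ show, and the hypothesis in the proposition has been chosen to meet this threshold exactly. Once the sharp Sard step is in hand, the measure-theoretic reformulation in the first paragraph closes the argument.
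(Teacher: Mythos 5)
Your reduction is the right one and is essentially the route the paper indicates (the paper itself only cites Almgren--Lieb, Theorems 5.2 and 5.4, and remarks that the $d\geq 2$ case is ``essentially a consequence of the Morse--Sard--Federer theorem''): co-area regularity follows once the pushforward measure $\nu$ is singular, which holds as soon as the set of critical values $u(\{\nabla u=0\})$ is Lebesgue null, and your $d=1$ argument via the absolute continuity of $W^{1,1}_\loc(\R)$ functions and the bound $|u(A)|\leq\int_A|u'|\,dx$ is correct (modulo the harmless remark that one must pass to the precise representative, which does not affect $\nu$).

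The gap is in the $d\geq 2$ step, which you flag but whose sketch, as written, does not close at the stated regularity. The classical filtration-plus-cube-counting proof of Morse--Sard needs $C^d$ for a scalar function on $\R^d$: with only $u\in C^{d-1,1}_\loc$, the Taylor bound on the top stratum $S_{d-1}$ reads $|u(y)-u(x_0)|\leq L|y-x_0|^d$, so a decomposition of a unit cube into $N^d$ subcubes of side $1/N$ sends each subcube meeting $S_{d-1}$ into an interval of length of order $N^{-d}$, and the total estimate is $N^d\cdot N^{-d}=O(1)$ --- finite, not zero. Equivalently, $u$ restricted to $S_{d-1}$ is H\"older of exponent $d$, which only gives $\mathcal{H}^1(u(S_{d-1}))<\infty$. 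The intermediate strata $S_k\setminus S_{k+1}$ are likewise not handled by a Vitali covering alone; in the classical proof they require an induction on dimension, and at the endpoint regularity this is exactly where the known proofs need genuinely more (Bates' sharpening of Sard's theorem for $C^{n-m,1}$ maps, resting on a Kneser--Glaeser-type rough composition argument, or the Federer-style treatment with a Lusin-type $C^d$ approximation of $C^{d-1,1}$ functions, which is what Almgren--Lieb's Theorem 5.4 encapsulates). So either invoke the sharp $C^{d-1,1}$ Morse--Sard theorem explicitly as a black box --- then your argument is complete and coincides with the paper's --- or supply this refined argument; the standard proof you outline does not by itself yield $|u(S)|=0$ at the threshold, which is consistent with the sharpness shown by Proposition \ref{coareairreg}.
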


The $C_\loc^{d-1,1}$ condition for $d\geq 2$ means that the function is $d-1$ times continuously differentiable and the $(d-1)$-st derivative is locally Lipschitz continuous. The conclusion in this case is essentially a consequence of the Morse--Sard--Federer theorem. Note that, since all $\mathcal C_1$ functions in dimension one are co-area regular, Theorem \ref{alli} recovers Coron's continuity result.

Remarkably, in dimension $d\geq 2$ there are co-area irregular functions and, more\-over, the regularity requirement in Proposition \ref{coareareg} is optimal on the H\"older scale; see \cite[Subsection 5.1]{AlLi}.

\begin{proposition}\label{coareairreg}
	Let $d\geq 2$, $0<\lambda<1$ and $0<\alpha<1$. Then there is an $f\in C^{d-1,\lambda}(\R^d)$ such that $0\leq f\leq 1$, $\supp f \subset [-1,1]^d$ and
	$$
	\mathcal G_f(\tau) = 2^d (1-\alpha) (1-\tau)_+
	\qquad\text{for all}\ \tau>0 \,.
	$$
	In particular, $f$ is co-area irregular.
\end{proposition}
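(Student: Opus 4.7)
We construct $f$ explicitly as a Whitney-type extension from a fat Cantor set. The identity $\mathcal{G}_f(\tau) = 2^d(1-\alpha)(1-\tau)_+$ forces the push-forward of $\1_{\{\nabla f = 0,\ f > 0\}}\,dx$ under $f$ to be $2^d(1-\alpha)$ times Lebesgue measure on $(0,1)$. In particular the critical values of $f$ cover a set of positive measure, which violates the Morse--Sard conclusion and is permissible precisely because $f$ is required to be only $C^{d-1,\lambda}$ with $\lambda<1$, strictly below the Morse--Sard H\"older threshold in dimension $d$ (cf.\ Proposition \ref{coareareg}).

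First we choose a fat Cantor set $K_0\subset[-1,1]$ of measure $2(1-\alpha)$ and set $K:=K_0\times[-1,1]^{d-1}$, so that $|K|=2^d(1-\alpha)$. On $K$ we prescribe the height function $\rho(x_1,\ldots,x_d):=g(x_1)$, where $g:[-1,1]\to[0,1]$ is continuous and non-decreasing, constant on each complementary interval of $K_0$, and whose push-forward of $\1_{K_0}\,dx_1$ equals Lebesgue measure on $(0,1)$. By Fubini, $\rho_*(\1_K\,dx)=2^d(1-\alpha)\,\1_{(0,1)}\,d\tau$. We then extend $\rho$ to a function $f$ on $\R^d$ with $\supp f\subset[-1,1]^d$ by a Whitney procedure on $\R^d\setminus K$: decompose the open complement into Whitney cubes $Q_j$, and on each $Q_j$ define $f$ as a smooth interpolant matching $\rho$ on $\partial Q_j\cap K$ with all partial derivatives of order $\leq d-1$ equal to zero there, perturbed slightly if necessary so that $\nabla f\neq 0$ almost everywhere on $Q_j$. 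One arranges $0\leq f\leq 1$ by keeping the bump amplitudes bounded by the oscillation of $\rho$ across the cube.

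The main obstacle is to verify $f\in C^{d-1,\lambda}(\R^d)$ across the interface $K$. The $k$-th derivative of the bump on $Q_j$ has size of order $(\mathrm{diam}\,Q_j)^{-k}$ times the oscillation of $\rho$ across $Q_j$, so global $C^{d-1,\lambda}$ regularity reduces to the Whitney-compatibility condition that $\rho|_K$ admit a trivial $(d-1)$-jet whose remainder is locally H\"older of exponent $d-1+\lambda$ at points of $K$. The geometry of $K_0$ (the rate at which its complementary intervals shrink) and the choice of the ramp function $g$ must be coordinated to achieve this regularity; this is possible because $\lambda<1$ permits the required slack, whereas for $\lambda=1$ Whitney extension combined with Morse--Sard would force $\rho|_K$ to be constant, contradicting the push-forward property. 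Once the $C^{d-1,\lambda}$ regularity is established, the claimed formula follows directly: up to Lebesgue-null sets, $\{f>\tau\}\cap\{\nabla f=0\}=\{\rho>\tau\}\cap K$, whose measure equals $2^d(1-\alpha)(1-\tau)_+$ for $\tau>0$ by the push-forward property of $\rho$.
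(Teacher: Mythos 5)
There is a genuine gap, and it lies at the heart of your construction: the slab structure $K=K_0\times[-1,1]^{d-1}$ with a profile $\rho(x)=g(x_1)$ depending on one coordinate only cannot satisfy the requirements, and the ``coordination between the geometry of $K_0$ and the ramp $g$'' that you defer to does not exist. To see this, note first that the Whitney compatibility condition you correctly identify for the trivial $(d-1)$-jet reads $|g(s)-g(t)|\le C|s-t|^{d-1+\lambda}$ for $s,t\in K_0$; since $d-1+\lambda>1$ and $K_0\subset\R$ has Hausdorff dimension $1$, any such $g$ maps $K_0$ onto a set of Hausdorff dimension at most $1/(d-1+\lambda)<1$, hence of Lebesgue measure zero, so the push-forward of $\1_{K_0}\,dx_1$ under $g$ is singular and can never be a multiple of Lebesgue measure on $(0,1)$. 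The failure is not an artifact of prescribing the trivial jet: even if you only require $\nabla f=0$ on $K$, fix $x'$ and apply the one-dimensional Morse--Sard theorem (valid already for $C^1$) to $t\mapsto f(t,x')$, whose derivative vanishes on $K_0$; its critical values $g(K_0)$ form a null set. Since $f|_K$ depends only on $x_1$, the contribution of $K$ to $\mathcal G_f$ is the distribution function of a measure singular with respect to Lebesgue measure, so $\mathcal G_f(\tau)=2^d(1-\alpha)(1-\tau)_+$ is impossible; in fact, if you also arrange $\nabla f\neq0$ a.e.\ off $K$, such an $f$ is co-area \emph{regular}. (Relatedly, your monotone ramp with uniform push-forward is forced to be $g(t)=|K_0\cap[-1,t]|/|K_0|$, which by the Lebesgue density theorem oscillates like $|s-t|$ near density points of $K_0$, again violating any H\"older bound with exponent $>1$. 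Also, for $\lambda=1$ the obstruction is that the critical values are null, not that $\rho|_K$ must be constant, and your push-forward normalization is off: the mass is $2(1-\alpha)$, not $1$.)

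What is actually needed --- and this is what the construction cited in the paper, \cite[Subsection 5.1]{AlLi}, does --- is a critical set that is a genuinely $d$-dimensional Cantor ``dust'': at generation $n$ one keeps on the order of $2^{dn}$ well-separated cubes of side comparable to $2^{-n}$ (tuned so the residual set has measure $2^d(1-\alpha)$ and lies in $[-1,1]^d$), and one assigns to the $n$-th generation pieces the $2^{dn}$ dyadic value-subintervals of $[0,1]$ of length $2^{-dn}$ in a measure-matched way. Then the per-piece oscillation $2^{-dn}$ is dominated by the Whitney threshold $(2^{-n})^{d-1+\lambda}$ precisely because $\lambda<1$, so the trivial-jet Whitney extension is of class $C^{d-1,\lambda}$, while the multiplicity $2^{dn}$ of pieces at scale $2^{-n}$ lets the critical values fill $[0,1]$ with exactly uniform push-forward; in your construction the multiplicity in the only active direction is $2^{n}$, which cannot compensate an oscillation bound of order $2^{-n(d-1+\lambda)}$. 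Your remaining steps (perturbing on the Whitney cubes of the complement so that $\nabla f\neq0$ a.e.\ there, and keeping $0\leq f\leq1$) are reasonable but would still need to be carried out quantitatively; the decisive repair, however, is to abandon the product-of-a-fat-Cantor-set-with-a-cube ansatz.
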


In fact, one can even prove that co-area irregular functions are dense if $d\geq 2$; see \cite[Theorem 5.3]{AlLi}.

Theorem \ref{alli} should be contrasted with Theorem \ref{alfrac}, which says that the map $u\mapsto u^*$ is continuous in $W^{s,p}(\R^d)$ for all $d\geq 1$, $0<s<1$ and $1\leq p<\infty$. We also mention that Steiner symmetrization is continuous in $W^{1,p}(\R^d)$ for $d\geq 2$ \cite{Bu2}.


\section{Stability questions}\label{sec:stab}

When it comes to sharp functional inequalities, the basic questions are to determine the sharp constant and to characterize the case of equality. A natural next step is to investigate the stability, that is, whether an almost optimal value of the functional in question implies that the state is close, in some distance, to an optimizer.

A qualitative version of this question can often be answered affirmatively using compactness methods, for instance, those presented in \cite{Sa}. It is essentially the question whether all minimizing sequences are relatively compact (up to symmetries).

A quantitative version of the question asks whether it is possible to bound the distance to an optimizer from above by a power of the difference of the value of the functional and its optimal value.

In the paradigmatic setting of the Sobolev inequality in $\dot H^1(\R^d)$, this question was asked in the influential paper \cite{BreLi} by Brezis and Lieb. An affirmative answer was given in \cite{BiEg}. Generalizations of this question have received a lot of attention, in particular, in the past fifteen years. For instance, a quantitative version of the isoperimetric inequality appeared in \cite{FMP} and quantitative versions of Theorem \ref{rearrgrad} and Corollary \ref{fracrearr} in \cite{CEFT,FFMMM}. For further references, we refer to \cite{Fr}.

Here we would like to focus on the Riesz rearrangement inequality. As we have already mentioned, Burchard \cite{Bu} characterized the cases of equality in the situation where all three functions are characteristic functions. In a deep paper \cite{Ch2} Christ has derived a quantitative version of this result. Similar results for particular cases of the BLL inequalities appear in \cite{Ch1,ChON}. A quantitative version of Lieb's strict rearrangement inequality of a somewhat different flavor can be found in \cite{Cama}.

To motivate the following result, let $B\subset\R^d$ be a ball, centered at the origin and note that, by the Riesz rearrangement inequality, for any set $E\subset\R^d$ of finite measure,
\begin{equation}
	\label{eq:rieszsimple}
	\iint_{E\times E} \1_B(x-y)\,dx\,dy \leq \iint_{E^*\times E^*} \1_B(x-y)\,dx\,dy \,.
\end{equation}
In applications one is also interested in a certain generalization of this, which is obtained by the bathtub principle. Namely, if $\rho\in L^1(\R^d)$ with $0\leq\rho\leq 1$, then
\begin{equation}
	\label{eq:rieszbath}
	\iint_{\R^d\times\R^d} \rho(x)\1_B(x-y)\rho(y)\,dx\,dy \leq \iint_{E^*\times E^*} \1_B(x-y)\,dx\,dy  \,,
\end{equation}
where $E^*$ is a ball of measure $|E^*|=\int_{\R^d} \rho\,dx$. By \cite{Bu}, under the condition $|B|^{1/d}<2|E|^{1/d}$, equality holds in \eqref{eq:rieszsimple} if and only if $E$ is (equivalent to) a translate of a ball of measure $|E|$. From this one deduces that, under the condition $|B|^{1/d}<2 \|\rho\|_{L^1(\R^d)}^{1/d}$, equality holds in \eqref{eq:rieszbath} if and only if $\rho$ is a.e.\ equal to the characteristic function of a ball of measure $\|\rho\|_{L^1(\R^d)}$.

The question now is whether the difference between the two sides of \eqref{eq:rieszbath} controls the distance of $\rho$ from translates of characteristic functions of balls of the appropriate measure. The natural measure of distance is the $L^1$-norm. The following result is from \cite{FrLi}; see also \cite{FrLi6}. It is obtained by a slight modification of Christ's methods in \cite{Ch2}. Indeed, for $\rho$ equal to a characteristic function, it is a special case of the result in \cite{Ch2}.

\begin{theorem}\label{christ}
	Let $0<\delta\leq 1/2$. Then there is a constant $c_{d,\delta}>0$ such that, for all balls $B\subset\R^d$, centered at the origin, and all $\rho\in L^1(\R^d)$ with $0\leq\rho\leq 1$ and
	$$
	\delta \leq \frac{|B|^{1/d}}{2\, \|\rho\|_{L^1(\R^d)}^{1/d}} \leq 1-\delta \,,
	$$
	one has
	$$
	\iint_{\R^d\times\R^d} \rho(x)\1_B(x-y)\rho(y)\,dx\,dy \leq \iint_{E^*\times E^*} \1_B(x-y)\,dx\,dy - c_{d,\delta} \|\rho\|_{L^1(\R^d)}^2 A[\rho]^2 \,,
	$$
	where $E^*$ is a ball of measure $|E^*|=\int_{\R^d} \rho\,dx$ and where
	$$
	A[\rho] := \left( 2\|\rho\|_{L^1(\R^d)} \right)^{-1} \inf_{a\in\R^d} \left\| \rho-\1_{E^*+a} \right\|_{L^1(\R^d)} \,.
	$$
\end{theorem}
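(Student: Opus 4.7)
The plan is to build on Christ's argument in \cite{Ch2}, which establishes the theorem in the special case $\rho = \1_E$. I would split the argument according to the size of $A[\rho]$.

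In the regime $A[\rho] \geq \eta_0(d,\delta)$, the asserted bound reduces, after scaling $\|\rho\|_{L^1}$ to $1$ (which pins $|B|$ to a compact interval by the $\delta$-hypothesis), to the qualitative claim $\inf D(\rho) > 0$ over configurations with $A[\rho] \geq \eta_0$. A minimizing sequence $\rho_n$ has radial-decreasing rearrangements $\rho_n^*$ converging Helly-wise to some $\rho_\infty^* \in [0,1]$ with $\int \rho_\infty^* = 1$, and combining the Riesz rearrangement inequality with Burchard's equality case \cite{Bu} and the strict bathtub principle forces any subsequential limit (after translation) to equal $\1_{E^*}$, contradicting $A[\rho_n] \geq \eta_0$.

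In the main regime $A[\rho] < \eta_0$, I would adapt Christ's argument. Let $a^* \in \R^d$ (nearly) attain the infimum in $A[\rho]$ and decompose $\rho = \1_{E^*+a^*} + g_+ - g_-$ with $g_+, g_- \geq 0$ supported in $(E^*+a^*)^c$ and $E^*+a^*$ respectively, both bounded by $1$, and $\int g_+ = \int g_- = m\,A[\rho]$. Expanding the defect,
\begin{equation*}
	D(\rho) = 2\int g_-\,\phi\,dx - 2\int g_+\,\phi\,dx - \iint g(x)\,\1_B(x-y)\,g(y)\,dx\,dy, \qquad \phi := \1_{E^*+a^*}*\1_B,\quad g := g_+ - g_- .
\end{equation*}
The function $\phi$ is radial about $a^*$ and nonincreasing in $|x - a^*|$; crucially, the $\delta$-hypothesis $\delta \leq |B|^{1/d}/(2m^{1/d}) \leq 1-\delta$ forces a quantitative drop of $\phi$ across $\partial(E^*+a^*)$. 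Combined with the support constraints on $g_\pm$ and the optimality of $a^*$ (which provides a first-moment condition ruling out the "pure translation" mode whose defect would be smaller than quadratic in $\|g\|_{L^1}$), this drop is the mechanism behind Christ's quadratic lower bound $\geq c_{d,\delta}\|g\|_{L^1}^2$ in the characteristic-function case. The passage from sets to bathtub functions goes through the layer-cake decomposition $g_\pm = \int_0^1 \1_{\{g_\pm > t\}}\,dt$: each super-level set inherits the location and $L^\infty \leq 1$ constraints of Christ's defect pieces, so his estimates transfer layer by layer and integrate back to the desired bound with a constant of the same order as in \cite{Ch2}.

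The principal obstacle is bookkeeping: Christ's iterative combinatorial argument is intricate, and one has to verify that each of his counting/compression steps admits a direct integral analog when defect pieces are replaced by densities $g_\pm$. Fortunately the structural invariants Christ exploits---disjoint support of $g_+, g_-$ across $\partial(E^*+a^*)$, the $L^\infty$ bound, first-moment optimality of $a^*$, and the quantitative drop of $\phi$---are shared between a characteristic function and the individual layers of a bathtub function. This is the sense in which, as remarked in \cite{FrLi}, the required modification of \cite{Ch2} is only "slight".
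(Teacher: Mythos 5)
The paper itself does not prove Theorem \ref{christ}: it cites \cite{FrLi,FrLi6}, where the result is obtained by reworking Christ's argument \cite{Ch2} so that it applies directly to densities $0\le\rho\le1$; for $\rho$ a characteristic function it is a special case of \cite{Ch2}. At the level of strategy your plan matches this route, and your expansion of the deficit $D(\rho)=2\int g_-\phi\,dx-2\int g_+\phi\,dx-\iint g(x)\1_B(x-y)g(y)\,dx\,dy$ with $\phi=\1_{E^*+a^*}*\1_B$ is correct. But two steps, as you describe them, have genuine gaps. In the regime $A[\rho]\ge\eta_0$, Helly convergence of the rearrangements $\rho_n^*$ tells you nothing about $\rho_n$ itself up to translation: even if you show (which also needs an argument, since mass can spread and $\int\rho_\infty^*=1$ is asserted rather than proved) that $\rho_\infty^*=\1_{E^*}$ via Riesz, Burchard \cite{Bu} and the bathtub equality case, this does not contradict $A[\rho_n]\ge\eta_0$, because $A$ measures the distance of $\rho_n$, not of $\rho_n^*$, to translated balls. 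To close this regime you must obtain compactness of $\rho_n(\cdot-a_n)$ itself, ruling out splitting and vanishing (e.g.\ by strict superadditivity in the mass of the maximal interaction, or by a near-equality theorem in Christ's spirit), and then pass to the limit in $A$.

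The more serious gap is the claimed layer-cake reduction to Christ's set case. The deficit is quadratic in $g$: writing $g_\pm=\int_0^1\1_{\{g_\pm>t\}}\,dt$ produces in $\iint g\,\1_B\,g$ cross terms between different layers $s\neq t$ that do not correspond to any single characteristic-function configuration, so Christ's estimates cannot be applied ``layer by layer and integrated back''. Moreover $|\{g_+>t\}|$ and $|\{g_->t\}|$ need not coincide for individual $t$ (only their integrals over $t$ agree), so single layers are not admissible defect pieces of equal added and removed mass. Since the linear gain coming from the (continuous, not jumping) radial decrease of $\phi$ across the boundary shell and the quadratic term are both of size $\|g\|_{L^1}^2$, the cross terms cannot be discarded or crudely bounded; this is exactly why \cite{FrLi,FrLi6} redo Christ's estimates directly with the densities $g_\pm$, using only $0\le g_\pm\le1$, their disjoint supports relative to $E^*+a^*$, and $\int g_+=\int g_-$, rather than reducing to the set case. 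Finally, near-optimality of $a^*$ yields an inequality against all translates, not a clean first-moment identity, so the translation mode must be treated as in \cite{Ch2} rather than by a formal first-order condition. As it stands, the proposal is a plausible outline of the intended approach, but the parts it defers are precisely the substance of the proof.
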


As a consequence we obtain the following quantitative rearrangement bounds for Riesz energies.

\begin{corollary}\label{christcor1}
	Let $0<\lambda<d$. Then there is a $c_{\lambda,d}>0$ such that, for all $\rho\in L^1(\R^d)$ with $0\leq\rho\leq 1$, one has
	$$
	\iint_{\R^d\times\R^d} \frac{\rho(x)\,\rho(y)}{|x-y|^\lambda}\,dx\,dy \leq \iint_{E^*\times E^*} \frac{dx\,dy}{|x-y|^\lambda}\,dx\,dy
	- c_{\lambda,d} \|\rho\|_{L^1(\R^d)}^{2-\lambda/d} A[\rho]^2 \,,
	$$
	where $E^*$ and $A[\rho]$ are as in Theorem \ref{christ}.
\end{corollary}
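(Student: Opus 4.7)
The plan is to deduce this corollary from Theorem \ref{christ} by writing the Riesz kernel as a superposition of characteristic functions of balls via layer-cake. Explicitly, I would use the identity
\[
\frac{1}{|x-y|^\lambda} = \lambda \int_0^\infty \1_{B_r}(x-y)\, r^{-\lambda-1}\, dr,
\]
where $B_r$ is the ball of radius $r$ centered at the origin, and then apply Fubini to express both sides of the desired inequality as a weighted integral over $r$ of the \emph{ball} energies
\[
J_r(\rho) := \iint_{\R^d\times\R^d} \rho(x)\1_{B_r}(x-y)\rho(y)\,dx\,dy, \qquad J_r(\1_{E^*}) = \iint_{E^*\times E^*}\1_{B_r}(x-y)\,dx\,dy.
\]

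Next I would set $R^* := (\|\rho\|_{L^1(\R^d)}/\omega_d)^{1/d}$, so that for $B=B_r$ one has $|B|^{1/d}/(2\|\rho\|_{L^1}^{1/d}) = r/(2R^*)$. Choosing $\delta = 1/4$, Theorem \ref{christ} then applies uniformly for $r$ in the window $[R^*/2,\, 3R^*/2]$ and yields there the pointwise-in-$r$ gain
\[
J_r(\rho) \leq J_r(\1_{E^*}) - c_{d,1/4}\, \|\rho\|_{L^1(\R^d)}^2\, A[\rho]^2.
\]
For $r$ outside this window, I would invoke the basic bathtub bound \eqref{eq:rieszbath}, which still gives $J_r(\rho)\leq J_r(\1_{E^*})$ (with no gain).

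Multiplying both estimates by $\lambda r^{-\lambda-1}$ and integrating over $r\in(0,\infty)$ produces
\[
\iint \frac{\rho(x)\rho(y)}{|x-y|^\lambda}\,dx\,dy \;\leq\; \iint_{E^*\times E^*}\!\!\frac{dx\,dy}{|x-y|^\lambda} \;-\; \lambda\, c_{d,1/4}\,\|\rho\|_{L^1}^2\,A[\rho]^2 \int_{R^*/2}^{3R^*/2} r^{-\lambda-1}\,dr.
\]
The remaining $r$-integral equals $(R^*)^{-\lambda}(2^\lambda - (2/3)^\lambda)/\lambda$, which is $\omega_d^{\lambda/d}\|\rho\|_{L^1}^{-\lambda/d}$ up to a positive dimensional constant, giving the factor $\|\rho\|_{L^1}^{2-\lambda/d}$ and producing the claimed inequality after relabeling constants as $c_{\lambda,d}$. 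There is no real obstacle here, since the hard content already resides in Theorem \ref{christ}; the only point requiring a moment's thought is to choose a $\delta$ strictly inside $(0,1/2)$ (the symmetric choice $\delta=1/4$ works) so that both the constant $c_{d,\delta}$ and the length of the admissible $r$-interval $[2\delta R^*, 2(1-\delta)R^*]$ are bounded away from zero independently of $\rho$, which is what allows the gain to survive the layer-cake integration with the explicit scaling $\|\rho\|_{L^1}^{2-\lambda/d}$.
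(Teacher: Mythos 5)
Your proposal is correct and follows essentially the same route as the paper: the layer-cake decomposition of $|x-y|^{-\lambda}$ into ball kernels, applying Theorem \ref{christ} with $\delta=1/4$ on the window of radii where the ratio lies in $[1/4,3/4]$, bounding the remaining radii by the plain Riesz/bathtub inequality, and integrating to pick up the factor $\|\rho\|_{L^1(\R^d)}^{-\lambda/d}$. The scaling computation and the choice of window match the paper's argument.
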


For $d=3$, $\lambda=1$, this corollary was shown earlier in \cite{BuCh}. It was shown simultaneously and independently in \cite{FuPr} in the range $0<\lambda<d-1$. Some generalizations appear in \cite{YaYa} and another proof for $0<\lambda<d-1$ in \cite{BuCh2}.

\begin{proof}
	We write
	\begin{align*}
		& \iint_{E^*\times E^*} \frac{dx\,dy}{|x-y|^\lambda} - \iint_{\R^d\times\R^d} \frac{\rho(x)\,\rho(y)}{|x-y|^\lambda}\,dx\,dy \\
		& = \lambda \int_0^\infty \frac{dR}{R^{\lambda+1}} \left( \iint_{E^*\times E^*} \1_{B_R}(x-y)\,dx\,dy -  \iint_{\R^d\times\R^d} \rho(x)\1_{B_R}(x-y)\rho(y)\,dx\,dy \right),
	\end{align*}
	where $B_R$ denotes the ball, centered at the origin, of radius $R$. Let
	$$
	I := \left\{ R>0 :\ \frac14 \leq \frac{|B_R|^{1/d}}{2\,\|\rho\|_{L^1(\R^d)}^{1/d}}\leq \frac34 \right\}.
	$$
	For $R\not\in I$, we bound the integrand of the $R$-integral from below by zero according to Riesz's theorem, while for $R\in I$ we apply Theorem \ref{christ} with $\delta=1/4$. We obtain
	\begin{align*}
		& \iint_{E^*\times E^*} \frac{dx\,dy}{|x-y|^\lambda} - \iint_{\R^d\times\R^d} \frac{\rho(x)\,\rho(y)}{|x-y|^\lambda}\,dx\,dy \geq \lambda\, c_{d,1/4}\, \|\rho\|_{L^1(\R^d)}^2\, A[\rho]^2\int_I \frac{dR}{R^{\lambda+1}} \,.
	\end{align*}
	The integral on the right side is a constant, depending on $d$ and $\lambda$, times $\|\rho\|_{L^1(\R^d)}^{-\lambda/d}$. This proves the corollary.
\end{proof}

Using a similar argument, one obtains a quantitative form of the fractional isoperimetric inequality \eqref{eq:fracisop} (albeit with a worse $s$-dependence than \cite{FFMMM}) and a quantitative rearrangement inequality for $\iint_{\R^d\times\R^d} \rho(x) |x-y|^\alpha \rho(y)\,dx\,dy$ with $\alpha>0$. The latter was used to analyze a flocking problem in \cite{FrLi}.


\bibliographystyle{amsalpha}

\end{document}